\newtheorem{lemma}{Lemma}[section]
\newtheorem{theorem}{Theorem}[section]
\author{Shanshan Zhang
  \and Xiumei Wang\thanks{Corresponding author. Email address: wangxiumei@zzu.edu.cn.
  Supported by the National Natural Science Foundation of China (Nos. 11801526, 11971445, and 11571323).}
  \and Jinjiang Yuan\thanks{
  Supported by the National Natural Science Foundation of China (No. 11671368).}}
\title[Formatting an article for DMTCS]{Even cycles and perfect matchings\\
in claw-free plane graphs}
\affiliation{
  % one line per affiliation, no postal codes, grant numbers or similar
  School of Mathematics and Statistics, Zhengzhou University, Zhengzhou, Henan 450001,  China}
\keywords{nice cycle, cycle-nice graph, claw-free graph, plane graph}
\begin{document}

%\verb!\includegraphics[width=0.13\textwidth]{dmtcs}!

\publicationdetails{22}{2020}{4}{6}{6062}
\maketitle
\begin{abstract}
\ Lov{\'a}sz showed that
a matching covered graph $G$ has an ear decomposition starting with an arbitrary edge of $G$.
Let $G$ be a graph which has a perfect matching. We call $G$ cycle-nice if for each even cycle $C$ of $G$, $G-V(C)$ has a perfect matching.
If $G$ is a cycle-nice matching covered graph, then $G$ has ear decompositions starting with an arbitrary even cycle of $G$.
In this paper, we characterize cycle-nice claw-free plane graphs.
We show that the only cycle-nice simple 3-connected claw-free plane graphs are
$K_4$, $W_5$ and $\overline C_6$.
Furthermore, every cycle-nice 2-connected claw-free plane graph can be obtained from a graph in the family ${\cal F}$ by a sequence of three types of operations,
where ${\cal F}$ consists of even cycles, a diamond, $K_4$,  and $\overline C_6$.
\end{abstract}

\section{Introduction}
\label{sec:in}
In this paper, all graphs are connected and loopless, but perhaps have multiple edges. We follow the notation and terminology in \cite{Bondy2008} except otherwise stated. We use $V(G)$ and $E(G)$ to denote the vertex set and edge set of $G$, respectively. For an edge $e$ of $G$, if there is another edge whose ends are the same as $e$, then $e$ is called a \emph{multiple edge}; otherwise, $e$ is called a \emph{single edge}.
The \emph{underlying simple graph} of $G$ is the simple spanning subgraph of $G$ obtained from $G$ by first deleting all the edges and then connecting each pair of adjacent vertices by a single edge.
A \emph{perfect matching} of $G$ is a set of
independent edges covering all vertices of $G$. For a connected
plane bipartite graph $G$ with the minimum vertex degree at least
2, a face $f$ of $G$ is said to be a \emph{forcing face} if
$G-V(f)$  has exactly one perfect matching. The concept of forcing
face was first introduced in Che and Chen \cite{Che2008}, which is
a natural generalization of the concept of forcing hexagon of a
hexagonal system introduced in Che and Chen \cite{Che2006}. For
research on forcing faces, see \cite{Che2008, Che2013}. In
particular, Che and Chen \cite{Che2008} presented a
characterization of plane elementary bipartite graphs whose finite
faces are all forcing (by using ear decompositions). Here, a graph
is \emph{elementary} if the union of all its perfect matchings
forms a connected subgraph. A graph $G$ is called
\emph{cycle-forced} if for each even cycle $C$ of $G$,
$G-V(C)$ has exactly one perfect matching. All the cycle-forced
Hamiltonian bipartite graphs and bipartite graphs have been
characterized, see \cite{Kong, Wang2018}.

A subgraph $H$ of $G$ is called \emph{nice} if
$G-V(H)$ has a perfect matching (see \cite{Lovasz1986}). In
particular, when $H$ is a cycle, we call $H$ a \emph{nice cycle}.
A nice subgraph is also called a conformal subgraph in
\cite{Carvalho+2005}, a well-fitted subgraph in
\cite{McCuaig2004}, and a central subgraph in \cite{Robertson1999}.
A graph $G$ is called \emph{cycle-nice} if each even cycle
of $G$ is a nice cycle. Clearly, a cycle-forced graph is also
cycle-nice. Given a proper subgraph $H$ of $G$, an \emph{ear} of $G$ with respect to $H$ is an odd path
of $G$ having both ends, but no interior vertices, in $H$. A graph
$G$ has an \emph{ear decomposition} if $G$ can be represented as
$G'+P_1+\cdots +P_r$, where $G'$ is a subgraph of $G$, $P_1$ is
an ear of $G$ with respect to $G'$, and $P_i$ is an ear of $G$
with respect to $G'+P_1+\cdots +P_{i-1}$ for $2\leq i\leq r$. Ear
decomposition is a powerful tool in the study of the structure of
matchings and the enumeration of matchings \cite{Liu2015,
Lovasz1986}. The idea of ear decomposition occurred first in
Hetyei \cite{Hetyei1964}, and was further developed by Lov{\'a}sz,
Carvalho, etc. \cite{ Carvalho1999, Carvalho2002, Carvalho2005,
Lovasz1983}.
A graph $G$ is \emph{matching covered} if each
edge of $G$ induces a nice subgraph of $G$.

Let $G$ be a matching
covered graph. Lov{\'a}sz \cite{Lovasz1983} showed that, for a
subgraph $G'$ of $G$, $G$ has an ear decomposition starting with
$G'$ if and only if $G'$ is a nice subgraph of $G$. This implies
that, for each edge $e$ of $G$, there is an ear decomposition
starting with $e$, that is, $G=G'+P_1+\cdots +P_r$, where $G'$ is
induced by the edge $e$. In this case $G'+P_1$ is an even cycle.
If $G$ is a cycle-nice matching covered graph, then $G$ has ear
decompositions starting with an arbitrary even cycle.

A graph $G$ is \emph{claw-free} if the underlying simple graph of $G$ contains
no induced subgraph isomorphic to the complete bipartite graph $K_{1,3}$.
Sumner \cite{Sumner1974,Sumner1974',Sumner1976} and Las Vergnas \cite{Vergnas1975}
studied perfect matchings in claw-free graphs, and independently
showed that every connected claw-free graph with an even number of vertices has a perfect matching.
A characterization of  2-connected claw-free cubic graphs  which  have ear decompositions starting with an arbitrary induced even cycle is presented in \cite{PengW2019}.

In this paper we present a characterization of cycle-nice
2-connected claw-free plane graphs. The paper is organized as
follows. In Section 2, we present some basic results. In Section
3, we prove that the only cycle-nice simple 3-connected claw-free
plane  graphs are $K_4$, $W_5$ and $\overline C_6$. In Section 4,
we show that every cycle-nice 2-connected claw-free plane graph
can be obtained from a graph in the family ${\cal F}$ by a
sequence of three types of operations, where ${\cal F}$ consists
of even cycles, a diamond, $K_4$,  and $\overline C_6$ (see
Figure 1).

\begin{figure}[H]
\centering
\unitlength 1.5mm % = 2.845pt
\linethickness{0.6pt}
\ifx\plotpoint\undefined\newsavebox{\plotpoint}\fi % GNUPLOT compatibility
\begin{picture}(80,20)(0,0)
\put(1,17){\line(1,0){10}}
\put(11,17){\line(0,-1){10}}
\put(11,7){\line(-1,0){10}}
\put(1,7){\line(0,1){10}}
\put(1,17){\line(1,-1){10}}
\qbezier(11,17)(24,0)(1,7)
\put(27,17){\line(1,0){10}}
\put(37,17){\line(-5,-3){5}}
\put(32,14){\line(-5,3){5}}
\put(27,17){\line(0,-1){10}}
\put(27,7){\line(0,-1){1}}
\put(30,9){\line(0,1){0}}
\put(27,6){\line(5,3){5}}
\put(32,9){\line(5,-3){5}}
\put(37,6){\line(0,1){11}}
\put(37,17){\line(0,1){0}}
\put(27,6){\line(1,0){10}}
\put(32,14){\line(0,-1){5}}
\put(45,12){\line(0,1){0}}
\put(52,17){\line(-6,-5){6}}
\put(52,17){\line(6,-5){6}}
\put(58,12){\line(-1,-3){2}}
\put(56,6){\line(-4,5){4}}
\put(52,11){\line(6,1){6}}
\put(52,15){\line(0,1){2}}
\put(52,15){\line(0,-1){4}}
\put(52,11){\line(-6,1){6}}
\put(46,12){\line(1,-3){2}}
\put(48,6){\line(1,0){8}}
\put(52,11){\line(-4,-5){4}}
\put(1,17){\circle*{0.8}}
\put(11,17){\circle*{0.8}}
\put(11,7){\circle*{0.8}}
\put(1,7){\circle*{0.8}}
\put(27,17){\circle*{0.8}}
\put(37,17){\circle*{0.8}}
\put(27,6){\circle*{0.8}}
\put(37,6){\circle*{0.8}}
\put(46,12){\circle*{0.8}}
\put(52,17){\circle*{0.8}}
\put(58,12){\circle*{0.8}}
\put(56,6){\circle*{0.8}}
\put(48,6){\circle*{0.8}}
\put(7,1){\makebox(0,0)[cc]{$K_4$}}
\put(32,1){\makebox(0,0)[cc]{$\overline{C_6}$}}
\put(52,1){\makebox(0,0)[cc]{$W_5$}}
\put(32,14){\circle*{0.8}}
\put(32,9){\circle*{0.8}}
\put(52,11){\circle*{0.8}}

\put(73,16){\line(-6,-5){6}}
\put(67,11){\line(6,-5){6}}
\put(73,6){\line(6,5){6}}
\put(79,11){\line(-6,5){6}}
\put(73,16){\line(0,-1){10}}
\put(73,16){\circle*{0.7}}
\put(67,11){\circle*{0.7}}
\put(73,6){\circle*{0.7}}
\put(79,11){\circle*{0.7}}
\put(73,1){\makebox(0,0)[cc]{a diamond}}
\put(43,-5){\makebox(0,0)[cc]{Fig. 1. The four graphs}}
\end{picture}
\end{figure}

% You may scarsely use \clearpage to advance to a new page if this
% improves the readability of the document structure
\vspace{0.2cm}
\section{Some basic results}
\label{sec:some}

We begin with some notions and notations.
A path $P$ is \emph{odd} or \emph{even}, if its length is odd or even, respectively.
A \emph{$uv$-path} is a path with ends $u$ and $v$. Let $G$ be a graph.
A $k$-vertex of $G$ is a vertex of degree $k$.
The set of neighbours of a vertex $v$ in $G$ is denoted by $N_G(v)$.
An \emph{even subdivision} of $G$ at an edge $e$ is a graph obtained from $G$ by replacing $e$ by an odd path $P_e$ with length at least three.
An \emph{odd expansion} of $G$ at a vertex $v$ of $G$ is a graph obtained from $G$
by the following four operations:

\noindent(\emph{i}) splitting $v$ into two vertices $v'$ and $v''$,

\noindent(\emph{ii}) adding an even path $P_v$ with length at least two which connects $v'$ and $v''$,

\noindent(\emph{iii}) distributing the edges of $G$ incident with $v$ among $v'$ and $v''$ such that $v'$ and $v''$ each has at least one neighbour in $V(G-v)$, and

\noindent(\emph{iv}) adding some edges joining $v'$ and $v''$ or not.

\noindent If the edges which join $v'$ and $v''$ are added,
then the odd expansion of $G$ is called an \emph{odd A-expansion};
otherwise, the odd expansion of $G$ is called an \emph{odd L-expansion}.

A graph $G$ is \emph{outerplanar} if it has a planar embedding in
which all vertices lie on the boundary of its outer face. An
outerplanar graph equipped with such an embedding is called an
\emph{outerplane graph}. Let $K$ be a 2-vertex cut of a graph $G$,
and $X$ the vertex set of a component of $G-K$. The subgraph of
$G$ induced by $X\cup K$ is called a $K$-\emph{component} of $G$.
Modify a $K$-component by adding a new edge (possible a multiple
edge) joining the two vertices of $K$. We refer to the modified
$K$-components as \emph{marked} $K$-\emph{components}.

Let $Y$ be a subset of $V(G)$. We use $\nabla(Y)$ to denote the \emph{edge cut} of $G$,
whose edges have one end in $Y$ and the other in $\overline{Y}$.
The graph $G\{Y\}$ and $G\{\overline{Y}\}$ are obtained from $G$ by contracting $\overline{Y}$ and $Y$ to a single vertex, respectively. %An {\it even subdivision} of a graph $G$ at an edge $e$ of $G$ is a graph obtained from $G$ by replacing  $e$  by an odd path $P_e$.

The following lemmas will be used to obtain the main results of this paper.

\begin{lemma}(\cite{Bondy2008})\label{property-5}
In a loopless 3-connected plane graph $G$, for any vertex $v$,
the neighbours of $v$ lie on a common cycle
which is the boundary of the face of $G-v$ where the vertex $v$ is situated.   \end{lemma}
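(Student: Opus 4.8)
The plan is to argue purely topologically, exploiting the interplay between connectivity and the structure of face boundaries in plane graphs. First I would observe that since $G$ is $3$-connected it has at least four vertices, so $G-v$ is a plane graph on at least three vertices; moreover, deleting a single vertex from a $3$-connected graph yields a $2$-connected graph (removing $v$ together with any further vertex $w$ leaves $G-\{v,w\}$ connected, as $G$ survives the deletion of any two vertices). Thus $G-v$ is $2$-connected, and I inherit its plane embedding from that of $G$ by simply erasing $v$ together with all edges incident to it.

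Next I would invoke the standard fact that in a $2$-connected plane graph every face is bounded by a cycle. This is where $2$-connectivity is essential: it rules out cut vertices and bridges, each of which would force a face boundary to be a closed walk repeating a vertex or traversing an edge twice rather than a genuine cycle. I would either cite this directly (it is proved in \cite{Bondy2008}) or reconstruct it from the fact that every $2$-connected graph admits an ear decomposition beginning with a cycle, under which each added ear splits one face into two cycle-bounded faces, so inductively all face boundaries remain cycles.

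The heart of the argument is to identify the face of $G-v$ in which $v$ lives and to show that its boundary captures exactly $N_G(v)$. In the embedding of $G$, the vertex $v$ together with its incident edges lies in the closure of a single region; when $v$ and those edges are erased, the faces of $G$ incident with $v$ coalesce into one face $f$ of $G-v$, and $v$ sits in the interior of $f$. Every edge $vu$ with $u\in N_G(v)$ lay inside the closure of $f$, so each neighbour $u$ must appear on the boundary walk of $f$. Combined with the previous step, this boundary is a cycle, and all neighbours of $v$ lie on it, which is precisely the claim.

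I expect the main obstacle to be making the ``faces coalesce into a single face'' step fully rigorous, since it relies on the Jordan curve theorem and careful bookkeeping of how the cyclic rotation of edges around $v$ links the incident faces into one region after deletion. The connectivity and cycle-boundary facts are standard; the genuinely geometric content is confirming that deleting $v$ merges \emph{precisely} the incident faces into one, and that no neighbour is stranded off the resulting boundary.
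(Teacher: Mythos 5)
The paper offers no proof of this lemma at all---it is quoted verbatim as a known result from \cite{Bondy2008}---and your argument is essentially the standard textbook proof of that result: $G-v$ is $2$-connected, every face of a loopless nonseparable plane graph other than $K_1$ and $K_2$ is bounded by a cycle, and the face $f$ of $G-v$ containing $v$ must have all of $N_G(v)$ on its boundary. Your proposal is correct; the one simplification worth noting is that the ``coalescing faces'' step you flag as the main obstacle can be bypassed entirely, since for each $u\in N_G(v)$ the interior of the edge $vu$ is disjoint from the embedding of $G-v$ and therefore lies inside the single face $f$ containing $v$, which forces $u$ to lie on the boundary cycle of $f$ without any bookkeeping about how the faces of $G$ incident with $v$ merge.
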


\begin{lemma}(\cite{Bondy2008})\label{property-6}
Every simple planar graph has a vertex of degree at most five.   \end{lemma}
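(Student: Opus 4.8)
The plan is to use Euler's formula together with the standard edge-counting bound for simple planar graphs. First I would fix a planar embedding of $G$ and let $n$, $m$, and $f$ denote the numbers of vertices, edges, and faces, respectively, so that Euler's formula gives $n-m+f=2$. The statement is essentially trivial when $n\le 6$, and for $n\le 2$ the degree of every vertex is at most $1$, so I would immediately reduce to the case $n\ge 3$, where the boundary of every face contains at least three edges.

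The key step is to bound $f$ in terms of $m$. Since $G$ is simple and $n\ge 3$, each face of the embedding is bounded by at least three edges, and each edge lies on the boundary of at most two faces; double-counting incidences between faces and edges therefore yields $3f\le 2m$, that is, $f\le \tfrac{2}{3}m$. Substituting into Euler's formula gives $2=n-m+f\le n-m+\tfrac{2}{3}m=n-\tfrac{1}{3}m$, whence $m\le 3n-6$.

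To finish I would argue by contradiction. Suppose every vertex of $G$ has degree at least six. The handshake identity $\sum_{v\in V(G)}\deg(v)=2m$ then forces $2m\ge 6n$, i.e. $m\ge 3n$, which contradicts the inequality $m\le 3n-6$ established above (as $3n>3n-6$). Hence $G$ must contain a vertex of degree at most five.

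There is no genuine obstacle here: this is the classical corollary of Euler's formula, and the only point requiring a little care is the degenerate range $n\le 2$ (and, if one wishes to be fully rigorous, the fact that the face-boundary bound $3f\le 2m$ requires $G$ to be simple with $n\ge 3$), both of which are dispatched by inspection before invoking the edge bound.
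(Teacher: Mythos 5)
Your proof is correct: the paper gives no proof of this lemma at all (it is quoted directly from Bondy and Murty's textbook), and your argument --- deriving $m\le 3n-6$ from Euler's formula and face--edge double counting, then contradicting the handshake identity if every degree were at least six --- is precisely the standard textbook proof of this classical corollary. The only pedantic point is that Euler's formula $n-m+f=2$ presupposes connectedness, which is harmless here since the paper assumes all its graphs are connected (and in general one simply applies your argument to a single component, since a vertex of degree at most five in a component has degree at most five in $G$).
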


%\begin{lemma}(\cite{Bondy2008})\label{property-3} Let $G$ be a $k$-connected graph, and let $X$ and $Y$ be subsets of $V$ of cardinality at least $k$. Then there exists in $G$ a family of $k$ pairwise disjoint $(X,Y)$-paths. \end{lemma}

%\begin{lemma}(\cite{Bondy2008}, The Fan Lemma)\label{property-7}
%\noindent{}Let $G$ be a $k$-connected graph, let $x$ be a vertex of $G$, and let $Y\subseteq V(G)\setminus\{x\}$ be a set of at least $k$ vertices of $G$. Then there exists a $k$-fan in $G$ from $x$ to $Y$.
%\end{lemma}

 From exercise 11.2.7 in \cite{Bondy2008}, we have the following Lemma \ref{property-7'}.

\begin{lemma} \label{property-7'}
Every simple 2-connected outerplanar graph has two nonadjacent vertices of degree two. \end{lemma}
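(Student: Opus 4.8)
The plan is to reduce to the maximal (triangulated) case and then exploit the classical ``two ears'' phenomenon. Throughout, let $G$ be a simple $2$-connected outerplanar graph; since the statement plainly fails for $K_3$, I take $G$ to have at least four vertices, as is implicit in the intended application. Fix an outerplane embedding. Because $G$ is $2$-connected, the boundary of its outer face is a cycle $C$, and since $G$ is outerplanar this cycle passes through every vertex; thus $C$ is a Hamilton cycle and every edge not on $C$ is a non-crossing chord drawn inside $C$.

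First I would triangulate: add non-crossing chords inside $C$, one at a time, until every bounded face is a triangle, obtaining a maximal outerplanar graph $G'$ on the same vertex set. This keeps the graph simple, outerplanar and $2$-connected, and only adds edges, so $G\subseteq G'$. The key observation is that degree $2$ is preserved downward: if $u$ has degree $2$ in $G'$ then, since $2\le\deg_G(u)\le\deg_{G'}(u)=2$ (the lower bound coming from $2$-connectivity of $G$), $u$ also has degree $2$ in $G$; and any two vertices nonadjacent in $G'$ stay nonadjacent in the subgraph $G$. Hence it suffices to produce two nonadjacent degree-$2$ vertices in the triangulated graph $G'$.

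For $G'$ I would use its weak dual $T$, whose nodes are the bounded (triangular) faces and whose edges join two triangles sharing a chord. A triangulated polygon on $n$ vertices has exactly $n-2$ triangles and $n-3$ chords, so $T$ is connected with $n-2$ nodes and $n-3$ edges, i.e.\ a tree; as $n\ge 4$ it has at least two leaves. A leaf of $T$ is an \emph{ear}: a triangle $T_1=uab$ with exactly one chord $ab$, its other two sides lying on $C$, so that $N_{G'}(u)=\{a,b\}$ and $u$ is a degree-$2$ \emph{ear tip}. Distinct leaves give distinct tips, because a degree-$2$ vertex lies in only one bounded face. Choosing two leaves $T_1=uab$ and $T_2=vcd$ thus yields degree-$2$ tips with $u\ne v$.

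Finally I would verify nonadjacency, which is the one step that genuinely needs an argument. Since $ab$ is a chord, each of $a,b$ lies in at least two triangles and so has degree at least $3$ in $G'$; as $\deg_{G'}(v)=2$, this forces $v\notin\{a,b\}=N_{G'}(u)$, so $u$ and $v$ are nonadjacent in $G'$ and hence in $G$. Pulling back through the triangulation then gives two nonadjacent degree-$2$ vertices of $G$. The only real obstacle is this nonadjacency check, and the degree comparison $\deg(v)=2<3\le\deg(a),\deg(b)$ dispatches it; everything else is bookkeeping about the tree $T$. If one prefers to avoid the embedding facts altogether, the same reduction works by directly citing the two-ears theorem for triangulated polygons.
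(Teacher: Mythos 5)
Your proof is correct, but there is nothing in the paper to compare it against: the authors do not prove this lemma at all, they simply cite it as Exercise 11.2.7 of Bondy and Murty's textbook. Your argument is a complete, self-contained resolution of that exercise, and it is the standard one: extend the outerplane graph to a maximal outerplanar graph $G'$ (a triangulated polygon); the weak dual of $G'$ is a tree with at least two leaves; each leaf is an ear whose tip has degree $2$; two distinct tips are nonadjacent because each end of a chord lies in two triangles and hence has degree at least $3$; and both the degree-$2$ property and nonadjacency pull back from $G'$ to its spanning subgraph $G$, since $2$-connectivity of $G$ forces $\delta(G)\geq 2$. What your write-up buys beyond the paper's bare citation is twofold: an actual proof, and the observation that the statement as printed is false for $K_3$ (simple, $2$-connected, outerplanar, but with no nonadjacent pair of vertices), so the hypothesis $|V(G)|\geq 4$ is genuinely needed; this is harmless for the paper, because the only place where two nonadjacent $2$-vertices (rather than just one $2$-vertex) are used is in Claim 2 of the proof of Theorem 3.1, where $G-u$ has at least seven vertices. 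The single step you pass over silently, namely that triangulating the inner faces keeps the graph simple (a diagonal of an inner face is never already an edge of $G$), is true but does require outerplanarity: such an edge together with the face boundary would form a theta subgraph trapping some interior vertices of that face boundary away from the outer face. With a sentence to that effect, the proof is airtight.
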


%\begin{lemma}(\cite{Carvalho2002})\label{property-1} Let $G$ be a graph which has a perfect matching. Then an edge $e$ of $G$ is admissible if and only if there is no barrier which contains both ends of $e$.
%\end{lemma}

\begin{lemma}(\cite{Bondy2008})\label{property-2}
Let $G$ be a 2-connected graph, and $K$ a 2-vertex cut of $G$.
Then the marked $K$-components are also 2-connected.  \end{lemma}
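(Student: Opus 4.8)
The plan is to verify directly the two defining requirements of 2-connectivity for a marked $K$-component $H$: that $H$ has at least three vertices and that $H$ has no cut vertex. Write $K=\{x,y\}$, let $X$ be the vertex set of the component of $G-K$ under consideration, and let $H$ be the subgraph of $G$ induced by $X\cup\{x,y\}$ together with the added edge $xy$. The vertex count is immediate: the contributing component is nonempty, so $|V(H)|=|X|+2\ge 3$. Everything therefore reduces to ruling out cut vertices.

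First I would establish the preliminary fact that each of $x$ and $y$ has a neighbour in $X$. Since $G$ is 2-connected, $G-y$ is connected; because there are no edges between $X$ and the union $X'$ of the other components of $G-K$, a vertex of $X$ can reach the remainder of $G-y$ only through $x$, which forces $x$ to have a neighbour in $X$ (if $x$ had none, $X$ would be cut off from $\{x\}\cup X'$ in $G-y$). Symmetrically, $y$ has a neighbour in $X$. Since the subgraph induced on $X$ is connected and $xy\in E(H)$, this at once gives that $H$ is connected.

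Next I would rule out cut vertices by cases. The vertices of $K$ are handled directly: $H-x$ is the subgraph of $G$ induced by $X\cup\{y\}$, which is connected because $X$ is connected and $y$ has a neighbour in $X$; hence $x$, and by symmetry $y$, is not a cut vertex. The decisive case is a putative cut vertex $v\in X$. Here the added edge $xy$ does the essential work: in $H-v$ the vertices $x$ and $y$ remain adjacent, so any component $B$ of $H-v$ disjoint from $\{x,y\}$ lies entirely inside $X$. Since the neighbours of $B$ in $G$ all lie in $X\cup\{x,y\}$ (the set $X$ sends no edges to $X'$), and $B$ is separated in $H-v$ from $x$, $y$, and the rest of $X$, the set $B$ is likewise separated from $V(G)\setminus(B\cup\{v\})$ in $G-v$. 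Thus $v$ would be a cut vertex of $G$, contradicting 2-connectivity, so no such $v$ exists. Combining the cases, $H$ has no cut vertex and is therefore 2-connected.

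The step I expect to demand the most care is the treatment of $v\in X$, specifically the passage from ``separated in $H-v$'' to ``separated in $G-v$''. The observation that makes this go through is that $X$ has no edges to the other $K$-components, so the adjacencies of $B\subseteq X$ in $G$ agree with those in $H$; and the added edge $xy$ is indispensable, since without it $x$ and $y$ could land on opposite sides of $v$, and one of them could reconnect $B$ to the rest of $G$ through some component of $X'$. The smallest instance, $X=\{w\}$ with $w$ adjacent to both $x$ and $y$, illustrates this cleanly: the marked component is a triangle and is 2-connected, whereas the bare induced subgraph is the path through $w$, in which $w$ is a cut vertex.
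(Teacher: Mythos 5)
Your proof is correct. Note, however, that the paper itself offers no proof of this lemma to compare against: it is quoted directly from Bondy and Murty's textbook (the citation \cite{Bondy2008}), so your argument is filling a gap the authors delegated to the reference. Your verification is sound and self-contained: the preliminary fact that each vertex of $K$ has a neighbour in $X$ correctly uses 2-connectivity of $G$ (via connectivity of $G-y$ and the absence of edges between distinct components of $G-K$); the case $v\in K$ is handled by that same fact; and the decisive case $v\in X$ correctly exploits the marked edge $xy$ to force $x$ and $y$ into a common component of $H-v$, so that any other component $B$ sits inside $X$, has identical adjacencies in $H$ and in $G$, and would therefore witness that $v$ is a cut vertex of $G$ --- a contradiction. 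Your closing remark about why the added edge is indispensable (the triangle versus the path through a single vertex $w$) is exactly the right illustration of why the lemma concerns \emph{marked} components rather than plain induced subgraphs.
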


\begin{lemma}\label{property-4}
Let $e$ be an edge of a graph $G$, and $G'$ an even subdivision of $G$ at $e$.
Then $G'$ is cycle-nice if and only if $G$ is cycle-nice.   \end{lemma}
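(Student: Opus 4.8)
The plan is to set up a parity-preserving bijection between the even cycles of $G$ and those of $G'$, and then to prove that corresponding cycles are nice simultaneously. Write the subdivided path as $P_e : u = x_0 x_1 \cdots x_k = v$, where $k$ is odd and $k \ge 3$, so the $k-1$ internal vertices $x_1,\dots,x_{k-1}$ each have degree $2$ in $G'$ with $N_{G'}(x_i)=\{x_{i-1},x_{i+1}\}$. First I would record that $G$ has a perfect matching if and only if $G'$ does, since cycle-niceness is only defined for graphs with a perfect matching. Given a perfect matching $M$ of $G$, if $e\in M$ replace $e$ by the matching $x_0x_1,x_2x_3,\dots,x_{k-1}x_k$ of $P_e$, and if $e\notin M$ add the matching $x_1x_2,x_3x_4,\dots,x_{k-2}x_{k-1}$ of the even set of internal vertices; conversely the degree-two internal vertices force how $P_e$ is matched, and a short parity check (the number $k-1$ of internal vertices is even) shows this restricts to a perfect matching of $G$.

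Next comes the bijection. Because each internal vertex of $P_e$ has degree two, any cycle of $G'$ meeting $V(P_e)\setminus\{u,v\}$ must traverse all of $P_e$, while every other cycle of $G'$ avoids these internal vertices and is a cycle of $G$ not using $e$. Hence a cycle $C$ of $G$ with $e\notin E(C)$ is left unchanged, and a cycle $C$ with $e\in E(C)$ corresponds to the cycle $C'$ obtained by replacing $e$ with $P_e$. Since one edge is replaced by $k$ edges, the length changes by $k-1$, which is even, so the bijection preserves parity and in particular matches even cycles of $G$ with even cycles of $G'$.

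The heart of the argument is then to show, for an even cycle $C$ of $G$ and its partner $C'$, that $G-V(C)$ has a perfect matching if and only if $G'-V(C')$ does. If $e\in E(C)$, then $V(C')=V(C)\cup\{x_1,\dots,x_{k-1}\}$ absorbs all of $P_e$, so $G'-V(C')=G-V(C)$ literally, and there is nothing to prove. If $e\notin E(C)$, I would split according to how many of $u,v$ lie on $C$: when neither does, $G'-V(C')$ is exactly the even subdivision of $G-V(C)$ at $e$, and the perfect-matching equivalence of the first step applies; when both do, $G'-V(C')$ is the disjoint union of $G-V(C)$ with the isolated path $x_1\cdots x_{k-1}$ on the even number $k-1$ of vertices, which carries its own perfect matching; and when exactly one of $u,v$ survives, $G'-V(C')$ is $G-V(C)$ with a pendant path of $k-1$ new vertices attached at that surviving endpoint, whose degree-one end forces a unique matching of those $k-1$ vertices, leaving the endpoint to be matched inside $G-V(C)$ as any perfect matching of $G-V(C)$ already does. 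In every case the two graphs are matchable together or not at all. Combining this with the bijection gives that $G$ is cycle-nice if and only if every even cycle of $G$ is nice, if and only if every even cycle of $G'$ is nice, if and only if $G'$ is cycle-nice.

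I expect the main obstacle to be precisely the endpoint sub-cases of the last step: one must verify that each leftover fragment of $P_e$ (a fully subdivided edge, an isolated even path, or a pendant even path) always carries a \emph{forced} matching of the correct parity, so that deleting $V(C')$ neither creates nor destroys perfect matchings relative to deleting $V(C)$. This is where the oddness of $k$, equivalently the evenness of $k-1$, is essential and must be tracked carefully in each configuration.
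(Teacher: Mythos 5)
Your proof is correct and takes essentially the same route as the paper's: both set up the correspondence between even cycles of $G$ and of $G'$ (a cycle through $e$ corresponds to the cycle obtained by replacing $e$ with $P_e$, all other cycles are unchanged, and parity is preserved since $P_e$ has odd length) and then transfer perfect matchings of the deleted graphs across this correspondence. The only difference is organizational—in the case $e\notin E(C)$ you split according to how many of $u,v$ lie on $C$ and identify the leftover fragment of $P_e$ structurally (isolated even path, pendant even path, or a further subdivision), whereas the paper splits according to how a given matching sits on $P_e$; the underlying matching surgery is identical.
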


\begin{proof} Let $u$ and $v$ be the two ends of $e$.
Since $G'$ is an even subdivision of $G$ at $e$,
$G'$ has an odd $uv$-path $P_e$
which is used to replace $e$ and whose internal vertices are 2-vertices of $G'$.

Suppose first that $G'$ is a cycle-nice graph. Let $C$ be an even cycle of $G$.
If $e\in E(C)$, let $C':=(C-e)\cup P_e$.
Then $C'$ is an even cycle of $G'$ such that $G-V(C)=G'-V(C')$.
Since $G'$ is  cycle-nice, $G'-V(C')$ has a perfect matching $M$.
Then $M$ is also a perfect matching of $G-V(C)$.
If $e\notin E(C)$, then $C$ is an even cycle of $G'$.
Let $M$ be a perfect matching of $G'-V(C)$.
If $M\cap E(P_e)$ is a perfect matching of $P_e$, let $M'=(M\setminus E(P_e))\cup\{e\}$; otherwise, let $M'=M\setminus E(P_e)$.
Then $M'$ is a perfect matching of $G-V(C)$.
Hence, $G$ is cycle-nice.

Conversely, suppose that $G$ is cycle-nice. Let $C'$ be an even cycle of $G'$.
If $E(P_e)\cap E(C')\neq\emptyset$, then $P_e$ is a segment of $C'$.
Let $C$ be the cycle obtained from $C'$ by replacing $P_e$ by the edge $e$.
Then $C$ is an even cycle of $G$ such that $G-V(C)=G'-V(C')$.
Let $M'$ be a perfect matching of $G-V(C)$.
Then $M'$ is also a perfect matching of $G'-V(C')$.
If $E(P_e)\cap E(C)=\emptyset$, then $C'$ is an even cycle of $G$.
Let $M'$ be a perfect matching of $G-V(C')$,
and let $M_1'$ be a perfect matching of $P_e$.
If $e\in M'$, then $(M'\setminus \{e\})\cup M'_1$ is a perfect matching of $G'-V(C')$;
if $e\notin M'$, then $M'\cup (M_1'\triangle E(P_e))$ is a perfect matching of $G'-V(C')$.
Consequently, $G'$ is cycle-nice. The lemma follows.
\end{proof}

\begin{lemma}\label{property-8}
Let $G$ be a 2-connected claw-free graph with a 2-vertex cut $\{u,v\}$. Then

($\romannumeral1$) $G-\{u,v\}$ has exactly two components $G_1$ and $G_2$.

Furthermore, suppose that $G$ is cycle-nice.
Let $G_i'$ be the underlying simple graph of the graph
which is obtained from the $\{u,v\}$-component of $G$
containing $G_i$ by deleting the possible edges of $G$
connecting $u$ and $v$. Then

($\romannumeral2$) at least one of $G_1'$ and $G_2'$ is a path (suppose that $G_2'$ is a path in the following statements);

($\romannumeral3$) if $G'_2$ is an odd path, then the two marked  $\{u,v\}$-components of $G$ are cycle-nice;

($\romannumeral4$) if $G'_2$ is an even path, then $G\{V(G_1)\}$ and $G\{V(G_2)\}$ are cycle-nice.
In particular, $uv\notin E(G)$ when neither $G'_1$ nor $G'_2$ is a path of length two.
\end{lemma}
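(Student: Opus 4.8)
For part (\romannumeral1) my plan is to use claw-freeness directly. Since $G$ is $2$-connected, $G-u$ is connected and $v$ is a cut vertex of $G-u$, so every component of $G-\{u,v\}$ contains a neighbour of $u$ (and, symmetrically, of $v$). If there were three components, picking one neighbour of $u$ in each would give three pairwise non-adjacent vertices in $N_G(u)$, which together with $u$ induce a $K_{1,3}$ — impossible. Hence there are exactly two components. The same reasoning shows that for each $i$ both $N_G(u)\cap V(G_i)$ and $N_G(v)\cap V(G_i)$ are cliques, a fact I will reuse below. Throughout I write $H_i:=G[V(G_i)\cup\{u,v\}]$, $n_i:=|V(G_i)|$, and note that since $G$ has a perfect matching $|V(G)|$ is even, so $n_1\equiv n_2\pmod 2$.

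For part (\romannumeral2) I would argue by contradiction, assuming neither $G_1'$ nor $G_2'$ is a path. Two structural inputs drive the argument. First, by Lemma~\ref{property-2} the marked $\{u,v\}$-component containing $G_i$ is $2$-connected, so $H_i$ has two internally disjoint $uv$-paths; and $G_i'$, being connected and (by the component argument of (\romannumeral1)) not a cycle, contains a vertex of degree at least three. Second, a connected claw-free bipartite graph has maximum degree at most two and hence is a path or a cycle; so a non-path side $H_i$ is non-bipartite, and I will invoke the fact that a $2$-connected non-bipartite graph has $uv$-paths of \emph{both} parities. Granting this, choose a parity $p\not\equiv n_1\pmod2$ and pick a $uv$-path $P_i\subseteq H_i$ with $|V(P_i)|\equiv p$ for $i=1,2$. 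Then $C:=P_1\cup P_2$ is an even cycle of $G$, and since $u,v\in V(C)$ the graph $G-V(C)$ is the disjoint union of $G_1-V(P_1)$ and $G_2-V(P_2)$; the order of $G_i-V(P_i)$ is $n_i-(|V(P_i)|-2)\equiv n_i-p$, which is odd. So $G-V(C)$ has an odd component and no perfect matching, contradicting cycle-niceness.

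For (\romannumeral3), with $G_2'$ an odd path, its internal vertices are $2$-vertices of $G$ and $|V(G_2)|$ is even. The marked component $\tilde H_2$ is an odd path plus the marked edge $uv$, i.e.\ an even cycle, hence cycle-nice. For $\tilde H_1$ the key observation is that $G$ is obtained from $\tilde H_1$ by replacing the marked edge $uv$ by the odd path $G_2'$, so $G$ is an even subdivision of $\tilde H_1$; Lemma~\ref{property-4} then transfers cycle-niceness from $G$ to $\tilde H_1$. For (\romannumeral4), with $G_2'$ an even path, $|V(G_2)|$ is odd; contracting $\overline{V(G_2)}$ turns side $2$ into an even cycle, so $G\{V(G_2)\}$ is cycle-nice. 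To handle $G\{V(G_1)\}$ I would transfer even cycles to $G$: a cycle avoiding the contracted vertex $w$ already lies in $G$, while a cycle through $w$ lifts by routing through all of side $2$ along the even path $G_2'$ (its even length preserving the length-two passage through $w$). Cycle-niceness of $G$ then gives a perfect matching of the lifted $G-V(C')$, and the parity point that $\{u,v\}\cup V(G_2)$ has odd order while only $u,v$ meet side $1$ forces exactly one of $u,v$ to be matched across the cut, which converts that matching into one of $G\{V(G_1)\}-V(C)$.

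Finally, for the clause $uv\notin E(G)$, suppose $uv\in E(G)$ while neither $G_1'$ nor $G_2'$ is a path of length two. Applying claw-freeness at $u$ to the triple $\{v,\,a,\,x_1\}$, where $a\in N_G(u)\cap V(G_1)$ and $x_1$ is the neighbour of $u$ on $G_2'$ (not adjacent to $v$ since $G_2'$ has length at least four), forces every such $a$ to be adjacent to $v$; symmetrically every neighbour of $v$ in $G_1$ is adjacent to $u$. Thus $S:=N_G(u)\cap V(G_1)=N_G(v)\cap V(G_1)$ is a clique all of whose vertices are adjacent to both $u$ and $v$, and since $G_1'$ is not a path of length two, $2$-connectivity gives $|S|\ge2$; for $s_1,s_2\in S$ the cycle $u\,s_1\,v\,s_2\,u$ is even and its removal isolates $G_2$, of odd order, contradicting cycle-niceness. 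The two steps I expect to be the main obstacles are the parity-flexibility fact used in (\romannumeral2) — that a $2$-connected non-bipartite (claw-free) side really does admit $uv$-paths of both parities — and the careful bookkeeping of multiple edges and of a possible direct edge $uv$, which must be tracked both in the even-subdivision viewpoint of (\romannumeral3) and in the contraction/lifting of (\romannumeral4).
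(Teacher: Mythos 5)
Your parts (\romannumeral1), (\romannumeral3), and most of (\romannumeral4) track the paper's own proof closely: (\romannumeral3) via ``$G$ is an even subdivision of the marked component'' plus Lemma~\ref{property-4}, and (\romannumeral4) via contraction, the lifting of cycles, and the parity argument forcing exactly one of $u,v$ to be matched across the cut; your $uv\notin E(G)$ argument (common clique $S$ and the $4$-cycle $us_1vs_2u$) is a minor variant of the paper's ($u_1,v_1$ and the $4$-cycle $uu_1v_1vu$). Part (\romannumeral2) is where you genuinely diverge: the paper manufactures two $uv$-paths of different parity inside $G_i'$ by a longest-path-plus-claw analysis, whereas you reduce to non-bipartiteness (claw-free bipartite $\Rightarrow$ maximum degree $\le 2$) and then invoke the classical theorem that in a $2$-connected non-bipartite graph any two vertices are joined by paths of both parities. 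That reduction is an attractive shortcut, but as written it contains a genuine gap.

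The gap is the $2$-connectivity hypothesis. The graph in which you need paths of both parities is $H_i=G[V(G_i)\cup\{u,v\}]$, and $H_i$ need not be $2$-connected; only the \emph{marked} component $H_i+uv$ is (Lemma~\ref{property-2}). Your bridging claim --- ``the marked component is $2$-connected, so $H_i$ has two internally disjoint $uv$-paths'' --- is false: take $V(H_1)=\{u,a,b,c,v\}$ with edges $ua,ub,ab,bc,bv,cv$. This $H_1$ is claw-free and $H_1+uv$ is $2$-connected, yet $b$ is a cut vertex of $H_1$ and every $uv$-path of $H_1$ passes through $b$. Nor can you simply apply the classical theorem to the marked component: the odd $uv$-path it returns could be the marked edge $uv$ itself, which is not a path of $H_i$ --- and odd is exactly the parity you need when $n_1,n_2$ are odd. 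The statement you want (both parities occur among $uv$-paths of $H_i$) is in fact true, but proving it requires rerunning the classical argument inside $H_i+uv$ while forbidding the marked edge: take an odd cycle $C\subseteq H_i$, use $2$-connectivity of $H_i+uv$ to obtain one or two paths from $\{u,v\}$ to $C$ that are disjoint (so neither can use the edge $uv$, since a path from $u$ through that edge would contain $v$), and then switch between the two arcs of $C$. Without this extra argument, step (\romannumeral2) is unproven. Two smaller points: your one-line inference ``non-path side $H_i$ is non-bipartite'' silently needs your clique observation to exclude $H_i$ being an even cycle with $u,v$ nonadjacent on it; and in (\romannumeral4) you omit the case where both edges of $C_1$ at the contracted vertex lie in $\nabla(u)$ (or both in $\nabla(v)$), in which case the cycle lifts through $u$ alone rather than through $G_2'$ --- the paper treats this case explicitly, and your matching-conversion argument does cover it once stated.
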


\begin{proof} Suppose that $G_1$, $G_2,$ \ldots, $G_t$ are the components of $G-\{u,v\}$, where $t\geq 2$.
Since $G$ is claw-free, for each vertex in $\{u,v\}$,
its neighbours lie in at most two components of $G-\{u,v\}$.
If $t\geq 3$, then there exists a component, say $G_1$, such that only one of $u$ and $v$, say $u$, has neighbours in $G_1$.
Then $u$ is a cut vertex of $G$, a contradiction. So, $t=2$. ($\romannumeral1$) follows.

Now, suppose that $G$ is cycle-nice. To show ($\romannumeral2$), we first prove the following claim.

\noindent{\bf Claim.} If $|V(G_i)|\geq 2$ and $G_i'$ is not a path,
then in $G_i'$ there are two $uv$-paths which have different parity, $i=1$ or 2.

For convenience, suppose that $i=1$. Since $G$ is 2-connected and $|V(G_1)|\geq 2$, there are two nonadjacent edges $uu'$ and $vv'$ with $u', v'\in V(G_1)$.
Since $G_1$ is connected, there is a path  $P'$ in $G_1$ connecting $u'$ and $v'$.
Let $P_1=uu'P'v'v$. Then $P_1$ is a  $uv$-path in $G_1'$ with length at least three.
Suppose that $P_1=x_0x_1\cdots x_sx_{s+1}$ is a longest such path in $G_1'$,
where $s\geq 2$, $x_0=u$ and $x_{s+1}=v$.
We will find another $uv$-path $P_1'$ in $G_1'$ such that $P_1$ and $P_1'$ have different parity.

When $V(G_1')\neq V(P_1)$, since $G_1$ is connected, there is a vertex $x$ of $G_1-V(P_1)$ such that
$xx_j\in E(G_1)$, $j\in\{1,2,\ldots,s\}$.
Since $G$ is claw-free, at least one of $xx_{j-1}$, $xx_{j+1}$ and $x_{j-1}x_{j+1}$
is an edge of $G_1'$.
Since $P_1$ is a longest $uv$-path, we have $xx_{j-1}$, $xx_{j+1}\notin E(G_1')$,
and so, $x_{j-1}x_{j+1}\in E(G_1')$. Let $P_1'=x_0P_1x_{j-1}x_{j+1}P_1x_{s+1}$.
Then $P_1'$ is the desired path.

When $V(G_1')=V(P_1)$, since $G'_1$ is not a path, there is an
edge $x_jx_k$ in $E(G'_1)\setminus E(P_1)$ with
$j,k\in\{0,1,2,\ldots,s+1\}$, $k> j+1$ and $\{j,k\}\neq
\{0,s+1\}$. Suppose first that for any edge $x_ix_{i'}$ in
$E(G'_1)\setminus E(P_1)$, $i$ and $i'$ have different parity. So
$j$ and $k$ have different parity and $k>j+2$. If $j\geq 1$,
since $j-1$, $j+1$ and $k$ have the same parity, we have
$x_{j-1}x_{j+1}$, $x_{j-1}x_k$, $x_{j+1}x_k \notin E(G_1)$. Then the
subgraph induced by $x_j$ and its three neighbors
$x_{j-1}$, $x_{j+1}$ and $x_k$ contains a claw, a contradiction. If $j=0$,
then $k\geq 3$, $k\neq s+1$ and $k$ is odd. Then $x_1x_k\notin
E(G_1)$. In this case, the subgraph induced by $x_0$ and its three
neighbours $x_1$, $x_k$ and one in $V(G_2)$ contains a claw, a
contradiction. Thus, we may suppose that $j$ and $k$ have the same
parity. Let $P_1'=x_0P_1x_jx_kP_1x_{s+1}$. Then $P_1'$ is the
desired path. The claim follows.

Now we continue the proof of statement ($\romannumeral2$).
If $G_1$ or $G_2$ is trivial, then ($\romannumeral2$) holds trivially.
Suppose in the following that $|V(G_i)|\geq2$ for $i=1,2$,
and neither $G_1'$ nor $G_2'$ is a path.
Since $|V(G)|$ is even, $|V(G_1)|$ and $|V(G_2)|$ have the same parity.
From the above claim, when $G_1$ and $G_2$ are odd components,
we may suppose that $P_i$ is an odd $uv$-path of $G_i'$, $i=1,2$;
when $G_1$ and $G_2$ are even components, we may suppose that $P_i$ is an even $uv$-path of $G_i'$, $i=1,2$.
Let $C=P_1\cup P_2$. Then $C$ is an even cycle of $G$.
Since $|V(G_i)\setminus V(C)|$ is odd, $G-V(C)$ has no perfect matching, a contradiction to the assumption that $G$ is cycle-nice.
So, at least one of $G'_1$ and $G'_2$ is a path. This proves ($\romannumeral2$).

%By (ii), at least one of $G'_1$ and $G'_2$ is a path. Suppose, without loss of generality, that $G'_2$ is a path. Write $Q=G'_2$.

($\romannumeral3$)
Since $G$ is 2-connected, by Lemma \ref{property-2},
the marked $\{u,v\}$-components of $G$, say $H_1$ and $H_2$, are 2-connected.
Suppose that $V(H_i)=V(G_i)\cup\{u,v\}$, $i=1,2$.
Since $G'_2$ is an odd path, $|V(H_1)|$ and $|V(H_2)|$ are even.
So the underlying simple graph of $H_2$ is an even cycle.
Thus $H_2$ is cycle-nice. Replace the odd path $G'_2$ of $G$ by an edge,
which connects $u$ and $v$. The resulting graph is $H_1$.
Since $G$ is cycle-nice, Lemma \ref{property-4} implies that $H_1$ is cycle-nice. ($\romannumeral3$) follows.

($\romannumeral4$) Since $G_2'$ is an even path, $G_1$ and $G_2$ are odd components.
We first suppose that neither $G_1'$ nor $G_2'$ is a path of length two, that is,
both $G_1$ and $G_2$ are nontrivial. We will show that $uv\notin E(G)$.
Recall that $G'_2$ is an even path. Then $G'_2$ has at least five vertices.
Suppose, to the contrary, that $uv\in E(G)$.
If $G'_1$ is a path, then it also has  at least five vertices.
This implies that $G$ contains a claw formed by $u$ and
its three neighbours (one in $G_1$, one in $G_2$, and $v$), a contradiction.
Thus, $G'_1$ is not a path. Since $G$ is 2-connected and $|V(G_1)|\geq 3$,
in $G_1$ there are a neighbour $u_1$ of $u$ and a neighbour $v_1$ of $v$ such that $u_1\neq v_1$.
Since $G$ is claw-free, considering $u$ and its three neighbours $u_1$, $v$ and one in $G'_2$, we have $u_1v\in E(G)$,
and considering $v$ and its three neighbours $v_1$, $u_1$ and one in $G'_2$,
we have $u_1v_1\in E(G)$. Let $C$ be the cycle $uu_1v_1vu$.
Then $G_2$ is an odd component of $G-V(C)$, and so,
$G-V(C)$ has no perfect matching, a contradiction to the assumption that $G$ is cycle-nice. Hence, $uv\notin E(G)$.

Recall that $G\{V(G_i)\}$ is the graph obtained from $G$ by contracting $\overline {V({G_i})}$ to a vertex $x_i$, $i=1,2$. Since
the underlying simple graph of $G\{V(G_2)\}$ is an even cycle,
$G\{V(G_2)\}$ is cycle-nice. To show that $G\{V(G_1)\}$ is
cycle-nice, let $C_1$ be an even cycle of $G\{V(G_1)\}$. If
$x_1\in V(C_1)$, let $e_1$ and $e_2$ be the two edges incident
with $x_1$ in $C_1$. If $e_1\in \nabla(u)$ and $e_2\in\nabla(v)$,
let $C=(C_1-x_1)\cup\{e_1,e_2\}\cup G'_2$. Then $C$ is an even
cycle of $G$ and $G\{V(G_1)\}-V(C_1)=G-V(C)$. Since $G$ is
cycle-nice, $G-V(C)$ has a perfect matching $M$, which is also a
perfect matching of $G\{V(G_1)\}-V(C_1)$.
If  $e_1$ and $e_2$ belong to  one of $\nabla(u)$ and $\nabla(v)$,  then $C_1$ is an even cycle of $G$.  If  $x_1\notin V(C_1)$, then $C_1$ is also an even cycle of $G$. Let $M$ be a
perfect matching of $G-V(C_1)$. Since $|V(G_2)|$ is odd, $u$ and $v$
are matched to different components of $G-\{u,v\}$ under any
perfect matching of $G$.
Thus, $M\cap E(G\{V(G_1)\})$ is a perfect matching of $G\{V(G_1)\}-V(C_1)$.
Consequently, $G\{V(G_1)\}$ is cycle-nice. ($\romannumeral4$) follows.
\end{proof}

\begin{lemma}\label{property-4.3} Let $G$ be a 2-connected graph and let $G'$ be an even subdivision or an odd expansion of $G$.
If $G'$ is claw-free, then $G$ is claw-free.
\end{lemma}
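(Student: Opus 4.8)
The plan is to prove the contrapositive: assuming $G$ contains an induced claw in its underlying simple graph, I will exhibit an induced claw in $G'$. The key preliminary observation, used throughout, is that the newly created internal vertices---those of the odd path $P_e$ in an even subdivision, and the internal vertices of the even path $P_v$ in an odd expansion---are $2$-vertices whose neighbours are themselves new vertices; moreover neither operation ever adds an edge between two distinct old vertices of $V(G)$. Consequently every non-adjacency among the original vertices is preserved, and the only adjacencies that can be destroyed are those involving the subdivided edge $e$, or the split vertex $v$. So I fix a claw of $G$ with centre $c$ and leaves $a,b,d$ and track its fate.

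For an even subdivision at $e=xy$: if $e$ is none of the three claw edges $ca,cb,cd$, then the same four vertices still induce a claw in $G'$. If $e$ is a claw edge, say $e=ca$, I replace the leaf $a$ by the neighbour $w$ of $c$ on $P_e$; since $P_e$ has length at least three, $w$ is a $2$-vertex adjacent only to $c$ and to another internal vertex, so $w$ is non-adjacent to $b$ and $d$ and $\{c,w,b,d\}$ induces a claw. This disposes of the subdivision case.

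For an odd expansion at $v$---covering both the A- and L-variants, since the optional edge $v'v''$ plays no role in the argument---there are three cases. If the claw avoids $v$, it survives verbatim. If $v$ is a leaf, the edge $cv$ is re-routed to $cv'$ or $cv''$, say $cv'$; because $a$ and $b$ were non-neighbours of $v$ they remain non-neighbours of $v'$, so $\{c,v',a,b\}$ induces a claw. The delicate case, which I expect to be the main obstacle, is when $v$ is the centre: the three edges $va,vb,vd$ are distributed among $v'$ and $v''$ by rule (\emph{iii}), so the leaves may be split across the two halves and no longer share a common neighbour. Here I use a pigeonhole argument: since each of $a,b,d$ attaches to $v'$ or $v''$, one of the two, say $v'$, is adjacent to at least two of them. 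If $v'$ catches all three, then $\{v',a,b,d\}$ is already a claw; if it catches exactly two, say $a,b$, I take the neighbour $p$ of $v'$ on $P_v$ as the third leaf---$p$ is an internal $2$-vertex of $P_v$, distinct from $v''$ because $P_v$ has length at least two, hence non-adjacent to $a$ and $b$---and then $\{v',a,b,p\}$ induces a claw.

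In every case $G'$ contains an induced claw, which is exactly the contrapositive of the lemma. The only points needing care are checking that the substitute leaves ($w$ in the subdivision, $p$ in the expansion) are genuine $2$-vertices distinct from the remaining leaves: this is precisely what the length conditions ($\ge 3$ for $P_e$, $\ge 2$ for $P_v$) guarantee. I note in passing that the $2$-connectedness of $G$ is not actually needed for this implication.
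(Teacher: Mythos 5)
Your proof is correct and is essentially the contrapositive of the paper's own argument: where the paper uses claw-freeness of $G'$ together with the internal vertices of $P_e$ (resp.\ $P_v$) to force $N_G(u)\setminus\{v\}$, $N_G(v)\setminus\{u\}$ (resp.\ the $V(G-v)$-neighbourhoods of $v'$ and $v''$) to be cliques, you start from a claw in $G$ and use those same internal path vertices, plus the same pigeonhole on how the edges at $v$ are distributed between $v'$ and $v''$, to exhibit a claw in $G'$. The structural facts invoked are identical, so this is the same proof in contrapositive form (with a somewhat more explicit case analysis, and your closing remark that 2-connectedness is not needed is also accurate).
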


\begin{proof}
Suppose first that $G'$ is an even subdivision of $G$ at an edge $e$, that is,
$G'$ is obtained from $G$ by replacing $e$ by an odd path $P_{e}$.
Let $u$ and $v$ be the two ends of $e$. If $G$ has a claw,
then the only possibility is that the center of the claw is $u$ or $v$.
Since $G'$ is claw-free, $N_G(u)\setminus \{v\}$ and  $N_G(v)\setminus \{u\}$ are cliques of $G'$, which are also cliques of $G$.
So $G$ has no claw with center $u$ or $v$. Thus, $G$ is claw-free.

We next suppose that $G'$ is an odd expansion of $G$ at a vertex $u$.
Let $u'$ and $u''$ be the two split vertices of $G'$.
Recall that $P_{u}$ is the $u'u''$-path of $G'$. Since $G'$ is claw-free,
the neighbours of $u'$ in $V(G-u)$ form a clique of $G$.
Otherwise, the subgraph of $G'$ induced by $u'$ and its three neighbours,
two nonadjacent neighbours in $V(G-u)$ and one in $P_{u}$,
contains a claw, a contradiction.
Similarly, the subgraph induced by the neighbours of $u''$ in $V(G-u)$
also contains a clique of $G$.
This implies that  $G$ is claw-free.
\end{proof}

Let $G$ be a 2-connected claw-free cycle-nice graph. If $G$ has a 2-vertex cut $K$,
by Lemma \ref{property-8}($\romannumeral1$), $G-K$ has only two components $G_1$ and $G_2$.
By Lemma \ref{property-8} and Lemma \ref{property-4.3}, either the two marked $K$-components of $G$ or both $G\{V(G_1)\}$ and $G\{V(G_2)\}$,
which are 2-connected, are claw-free and cycle-nice.
If the smaller 2-connected claw-free cycle-nice graph still has a 2-vertex cut,
we will repeat this procedure until we obtain a 3-connected claw-free cycle-nice graph,
whose underlying simple graph is $K_2$ or a 3-connected claw-free cycle-nice graph.

\section{3-connected graphs}
\label{sec:3-connected}

\begin{theorem}\label{property-9} Let $G$ be a simple 3-connected claw-free plane graph.
Then $G$ is cycle-nice if and only if $G$ is $K_4$, $W_5$ or $\overline C_6$. \end{theorem}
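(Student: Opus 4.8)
The plan is to treat the two directions separately. For sufficiency I would verify directly that each of $K_4$, $W_5$ and $\overline C_6$ is cycle-nice. Each has at most six vertices, so its even cycles are exactly its $4$-cycles and its (Hamiltonian) $6$-cycles, and for every such cycle $C$ one exhibits a perfect matching of $G-V(C)$: after deleting a $6$-cycle nothing remains, and after deleting a $4$-cycle the two surviving vertices are always adjacent (a rim edge in $W_5$, a cross edge of the prism $\overline C_6$, an edge of $K_4$). This is a short finite check.

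The substance is necessity, so let $G$ be a simple $3$-connected claw-free plane graph that is cycle-nice, and aim to force $G\in\{K_4,W_5,\overline C_6\}$. I would first set up two engines. The first is a degree bound: by Lemma~\ref{property-5} the neighbours of any vertex $v$ lie on the cycle $C$ bounding the face of $G-v$ in which $v$ sits, and since the spokes from $v$ occupy that face, every edge of $G[N(v)]$ is drawn in the complementary region, so $G[N(v)]$ admits an outerplanar embedding. Claw-freeness means $G[N(v)]$ has no independent set of size three; outerplanar graphs are $3$-colourable and hence have independence number at least $\lceil |N(v)|/3\rceil$, giving $\deg(v)\le 6$. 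The second engine is an isolation principle: cycle-niceness forces every component of $G-V(C^{*})$ to be even for each even cycle $C^{*}$, so no even cycle $C^{*}$ can satisfy $N(S)\setminus S\subseteq V(C^{*})$ for an odd vertex set $S$ disjoint from $V(C^{*})$. Applying the one-vertex case to $C^{*}=C$, which contains all of $N(v)$, shows that $C$ must be an odd cycle for every $v$; equivalently, the face lengths around each vertex sum to an odd number. This single-vertex test already kills the even-link configurations (for example the octahedron, whose links are $4$-cycles, dies immediately, forcing the surviving cases to degrees $3$, $4$ and $5$).

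With these in hand I would run a case analysis on a vertex of small degree, guaranteed by Lemma~\ref{property-6}. Claw-freeness pins down the local structure: a degree-$3$ vertex lies in a triangle, and the neighbourhood of a degree-$4$ or degree-$5$ vertex is an outerplanar graph of independence number two, leaving only a few possibilities such as $C_4$ or $C_5$. The goal is to show $|V(G)|\le 6$: starting from a triangle at a degree-$3$ vertex, or from the $C_5$-neighbourhood of a degree-$5$ vertex, one uses the odd-link condition together with the isolation principle to prevent the graph from growing, propagating the structure outward until it closes up. Once $|V(G)|\le 6$ is established, a direct enumeration of simple $3$-connected claw-free plane graphs on at most six vertices leaves exactly $K_4$, $W_5$ and $\overline C_6$.

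I expect the main obstacle to be precisely the configurations that survive the single-vertex isolation test, namely those in which every link cycle is odd. The extremal example is the icosahedron, which is $5$-regular, $3$-connected, planar and claw-free, with every neighbourhood a pentagon, so each link cycle has odd length $5$ and the one-vertex test does not eliminate it. Ruling such graphs out requires the stronger multi-vertex form of the isolation principle: one must produce an even cycle whose deletion leaves an odd component. For the icosahedron, the $10$-cycle through the two equatorial pentagons isolates the two poles as a pair of nonadjacent vertices, yielding two odd components and contradicting cycle-niceness. Turning this kind of ad hoc construction into a uniform argument — exhibiting, in any candidate graph that is too large, an even cycle surrounding an odd set of vertices while keeping track of the parities of all the remaining components — is the technical heart of the proof, and the step where the planar, claw-free and cycle-nice hypotheses must be combined most carefully.
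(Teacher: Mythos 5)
Your sufficiency check is fine, and your ``second engine'' (no even cycle may leave an odd component, hence every link cycle $C_x$ is odd) is exactly the paper's opening move. But the proposal has a genuine gap at precisely the point you flag yourself: you never give the argument that bounds the size of $G$. The phrases ``propagating the structure outward until it closes up'' and ``turning this kind of ad hoc construction into a uniform argument \dots is the technical heart of the proof'' describe what a proof would have to do; they are not a proof. As written, everything after the two engines is a plan plus one worked example (the icosahedron), so necessity is not established for any graph other than that single example.

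The paper closes this gap with a local, not global, construction, and this is the idea you are missing. Take $u$ of minimum degree and its link cycle $C_u=u_0u_1\cdots u_su_0$ (so $s$ is even). The paper's Claim~1 shows $V(G)=V(C_u)\cup\{u\}$: if some vertex $v$ outside $V(C_u)\cup\{u\}$ is adjacent to $u_i$, then either $v$ is adjacent to $u_{i-1}$ or $u_{i+1}$ --- in which case rerouting $C_u$ through $v$ (replace $u_{i-1}u_i$ by $u_{i-1}vu_i$) gives an \emph{even} cycle isolating $u$ --- or claw-freeness forces $u_{i-1}u_{i+1}\in E(G)$, and then 3-connectivity and claw-freeness force $uu_{i-1},uu_i,uu_{i+1}\in E(G)$, after which a detour of suitable parity along the link cycle $C_{u_i}$ again yields an even cycle isolating $u$. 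In particular the icosahedron dies instantly (any outer vertex adjacent to two consecutive vertices of $C_u$ gives the parity-flipped cycle); no equatorial $10$-cycle or ``surrounding an odd set'' machinery is needed. Once Claim~1 holds, $G-u$ is a $2$-connected outerplane graph, so Lemma~\ref{property-7'} supplies nonadjacent $2$-vertices, which forces $d_G(u)=3$; a second claim ($s\le 4$), again proved by exhibiting parity-adjusted even cycles that isolate a vertex, reduces everything to $s\in\{2,4\}$, i.e.\ to $K_4$, $W_5$, $\overline{C_6}$. So the uniform argument you were seeking exists, but it works by perturbing the link cycle of one fixed minimum-degree vertex, not by hunting for a large even cycle enclosing an odd region --- the latter is why your outline could not be completed.
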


\begin{proof} It is easy to check that $K_4$, $W_5$ and $\overline C_6$ are cycle-nice.
Now, we assume that $G$ is cycle-nice.
Lemma \ref{property-5} implies that for any vertex $x$ of $G$, the neighbours of $x$ lie on a common cycle, denoted by $C_x$,
which is the boundary of the face of $G-x$ in which $x$ is situated.
If $C_x$ is an even cycle, then $x$ is an isolated vertex of $G-V(C)$,
a contradiction to the assumption that $G$ is cycle-nice.
Thus, for any vertex $x$ of $G$, $C_x$ is an odd cycle.
By Lemma \ref{property-6}, $\delta(G)\leq 5$.
Since $G$ is 3-connected, we have $3\leq\delta(G)\leq 5$.
Let $u$ be a vertex of $G$ such that $d(u)=\delta(G)$.
Let $C_u=u_0u_1\cdots u_su_0$. Then $s$ is even and $s\geq 2$.
We first prove the following claim.
All subscripts are taken modulo $s+1$ in the following.

\noindent{\bf Claim 1.} $V(G)=V(C_u)\cup\{u\}$.

Suppose, to the contrary, that there is a vertex $v\in V(G)\setminus (V(C_u)\cup\{u\})$.
We may further suppose that $vu_i\in E(G)$ for some $i\in\{0,1,\ldots,s\}$.
If $u_{i-1}v$ or $u_{i+1}v$ is an edge of $G$,
let $C'=C_u-u_{i-1}u_i+u_{i-1}vu_i$ or $C'=C_u-u_iu_{i+1}+u_ivu_{i+1}$.
Then $C'$ is an even cycle of $G$ such that $u$ is an isolated vertex of $G-V(C')$,  a contradiction.
Thus $u_{i-1}v$, $u_{i+1}v\notin E(G)$.
Since $G$ is claw-free, we have $u_{i-1}u_{i+1}\in E(G)$.
Furthermore, when $s>2$, $u_{i-1}u_{i+1}$ lies in the exterior of $C_u$,
and $v$ lies in the interior of the cycle $u_{i-1}u_iu_{i+1}u_{i-1}$.
If $uu_i\notin E(G)$, since $\delta(G)\geq 3$, we have $s>2$.
Then $G-\{u_{i-1},u_{i+1}\}$ has at least two components, one contains $v$ and the other contains $u$,
a contradiction to the assumption that $G$ is 3-connected. Thus, $uu_i\in E(G)$.
Consider the vertex $u_i$ and its three neighbours $u$, $u_{i-1}$ and $v$.
Since $G$ is claw-free and $vu_{i-1}$, $vu\notin E(G)$, we have $uu_{i-1}\in E(G)$.
When consider $u_i$ and its three neighbours $u$, $u_{i+1}$ and $v$, we have  $uu_{i+1}\in E(G)$.
Recall that $C_{u_i}$ is the facial cycle of the face of $G-u_i$ which contains all neighbours of $u_i$.
Then we have $u,v,u_{i-1},u_{i+1}\in V(C_{u_i})$.
Let $P_1$ and $P_2$ be the two segments of $C_{u_i}$ such that $u_{i-1}P_1vP_2u_{i+1}uu_{i-1}=C_{u_i}$.
Since $C_{u_i}$ is an odd cycle, one of $P_1$ and $P_2$ is an odd path,
and the other is an even path.
Suppose, without loss of generality, that $P_1$ is an odd path.
Let $C'=(C_u-u_{i-1}u_i)+u_{i-1}P_1vu_i$.
Then $C'$ is an even cycle of $G$ such that $u$ is an isolated vertex of $G-V(C')$, a contradiction. Claim 1 follows. \hfill$\Box$

By Claim 1, $G-u$ is a 2-connected outerplane graph.
We now let $u$ be situated in the outer face of $G-u$.
By Lemma \ref{property-7'}, $G-u$ has a vertex $x$ of degree two.
Since $\delta(G)\geq 3$, we have $xu\in E(G)$ and $d_G(x)=3$.
Recall that $d_G(u)=\delta(G)$. We have $d_G(u)=3$.
Let $N_G(u)=\{u_i,u_j,u_l\}$, $0\leq j<l<i\leq s$.
Then $u_i$, $u_j$ and $u_l$ effect a partition of $C_u$ into three edge-disjoint paths $P_1$, $P_2$ and $P_3$,
which connect $u_l$ and $u_i$, $u_i$ and $u_j$, $u_j$ and $u_l$, respectively.

Since $G$ is claw-free, at least one of $u_iu_j$, $u_ju_l$ and $u_lu_i$ is an edge of $G$.
Suppose, without loss of generality, that $u_ju_l\in E(G)$.
If $P_3\neq u_ju_l$, from the fact that $G-u$ is an outerplane graph,
we find that $\{u_j,u_l\}$ is a 2-vertex cut of $G$,
a contradiction to the assumption that $G$ is 3-connected.
Thus, $P_3=u_ju_l$.
So we may suppose, without loss of generality,
that $N(u)=\{u_0,u_1,u_i\}$ for some $i\in\{2,3,\ldots,s\}$.
Then $j=0$ and $l=1$, and so, $P_1$ is a $u_1u_i$-path,
$P_2$ is a $u_iu_0$-path, and $P_1$ and $P_2$ have the same parity.
Since $G-u$ is a simple outerplane graph,
there are at least two nonadjacent 2-vertices in $G-u$ by Lemma \ref{property-7'}.
Since $\delta(G)\geq3$, each 2-vertex of $G-u$ is a neighbour of $u$ in $G$.
Thus $G-u$ has at most three 2-vertices ($u_0, u_1$, or $u_i$) and $d_{G-u}(u_i)=2$.

Consider $u_i$ and its three neighbours.
%Since $G$ is claw-free, we have $u_{i-1}u_{i+1}\in E(G)$.
For distinct indices $i'$ and $i''$,
if $\{i',i''\}\subseteq\{1,2,3,\ldots,i-1\}$ or   $\{i',i''\}\subseteq\{i+1,i+2,\dots,s,0\}$ satisfying $|i'-i''|\geq2$,
then $u_{i'}u_{i''}\notin E(G)$.
Otherwise, since $G-u$ is a simple outerplane graph,
$\{u_{i'}, u_{i''}\}$ is a 2-vertex cut of $G$,
a contradiction to the 3-connectivity of $G$.
Recall that $\delta(G)\geq3$ and $d_G(u_i)=3$.
If $k\in \{2,3,\ldots,i-1\}$, then $\emptyset\neq N_G(u_k)\setminus\{u_{k-1},u_{k+1}\}\subseteq V(P_2)\setminus \{u_i\}$;
if  $k\in\{i+1,i+2,\dots,s\}$,
then $\emptyset\neq N_G(u_k)\setminus\{u_{k-1},u_{k+1}\}\subseteq V(P_1)\setminus\{u_i\}$.
We now show the following claim,
which implies that the number of vertices of $C_u$ is at most five.

\noindent{\bf Claim 2.}  $s\leq 4$.

Suppose, to the contrary,  that $s>4$. Then $s\geq 6$. If $i=2$ or $i=s$, by symmetry, we need only consider the case where $i=2$.
Since $G-u$ has two nonadjacent 2-vertices, we have  $d_{G-u}(u_0)=d_{G-u}(u_2)=2$.
Then, for each $j\in\{3,4,\ldots,s\}$, $u_j$ is adjacent to $u_1$,
which is the only vertex in $V(P_1)\setminus\{u_i\}$.
Since $G-u$ is an outerplane graph, $u_1$ and its three neighbours $u_0$,
$u_2$ and $u_4$ form a claw in $G$, a contradiction.
Thus, $i\neq 2$ and $i\neq s$.
This implies that the lengths of $P_1$ and $P_2$ are at least two.

If one of $P_1$ and $P_2$ has length two, say $P_1$, then $i=3$ and $P_1=u_1u_2u_3$. Since $P_1$ and $P_2$ have the same parity and $s\geq 6$,  $P_2$ has at least five vertices.
If $u_1u_s\in E(G)$, let $C=uu_3u_4\dots u_su_1u$.
Then $C$ is an even cycle of $G$ such that $u_0$ and $u_2$ are two isolated vertices of $G-V(C)$.
Thus $G-V(C)$ has no perfect matching.
This contradiction implies that $u_1u_s\notin E(G)$.
Then $u_s$ is adjacent to $u_2$, the only vertex in $V(P_1)\setminus\{u_i,u_1\}$.
Consequently, all internal vertices of $P_2$ are adjacent to $u_2$.
Then $u_2$ and its three neighbours $u_1$, $u_3$ and $u_s$ form a claw, a contradiction.
Hence, $P_1$ and $P_2$ have length at least three.

We now consider $u_i$ and its three neighbours $u_{i-1}$, $u_{i+1}$ and $u$.
Since $uu_{i-1}$, $uu_{i+1}\notin E(G)$, we have $u_{i-1}u_{i+1}\in E(G)$.
We assert that $u_{i-2}u_{i+2}\in E(G)$.
In fact, if $u_{i-1}u_{i+2}\in E(G)$,
by considering $u_{i-1}$ and its three neighbours $u_{i-2}$,  $u_i$ and $u_{i+2}$,
we have $u_{i-2}u_{i+2}\in E(G)$.
Similarly, if $u_{i-2}u_{i+1}\in E(G)$, we have $u_{i-2}u_{i+2}\in E(G)$.
For the case where $u_{i-1}u_{i+2}\notin E(G)$ and $u_{i-2}u_{i+1}\notin E(G)$,
we have $N_G(u_{i-2})\setminus\{u_{i-1},u_{i-3}\}\subseteq \{u_{i+2}, \ldots, u_s, u_0\}$
and $N_G(u_{i+2})\setminus\{u_{i+1},u_{i+3}\}\subseteq \{u_1,\ldots,u_{i-2}\}$.
Recall that $\delta(G)\geq3$.
If $u_{i-2}u_{i+2}\notin E(G)$,
then there is an index $j\in \{1, 2, \ldots, {i-3}\}$ such that $u_{i+2}u_{j}\in E(G)$.
Recall that $G-u$ is a outerplane graph, whose boundary of outer face is $C_u$.
Then $u_{i-2}$ is a 2-vertex of $G$, a contradiction.
The assertion follows.

If $P_1$  and $P_2$ are even, let $C=u_{i+2}u_{i-2}u_{i-1}u_iuu_1u_0u_s\dots u_{i+2}$;
if $P_1$  and $P_2$ are odd, let $C=u_{i+2}u_{i-2}u_{i-1}u_iuu_0u_s\dots u_{i+2}$.
Then $C$ is an even cycle of $G$ such that $u_{i+1}$ is an isolated vertex of $G-V(C)$, a  contradiction. Claim 2 follows.\hfill$\Box$

Recall that $s\geq 2$. If $s=2$, $G$ is $K_4$.
If $s=4$, we have $C_u=u_0u_1u_2u_3u_4u_0$.
If $i=2$, then  $d_{G-u}(u_3)\geq3$, $d_{G-u}(u_4)\geq3$, and so,
$u_1u_3,u_1u_4\in  E(G)$.
Hence, $G$ is $W_5$. By symmetry, if $i=4$, $G$ is also $W_5$.
If $i=3$, then $d_{G-u}(u_2)\geq3$, $d_{G-u}(u_4)\geq3$.
If $u_4$ is adjacent to $u_1$, let $C=uu_1u_4u_3u$.
Then $G-V(C)$ has no perfect matching.
So $u_4u_1\notin E(G)$. By symmetry, we have $u_2u_0\notin E(G)$.
Consequently, $u_2u_4\in E(G)$. Hence, $G$ is $\overline C_6$.
The result follows.
\end{proof}

\section{2-connected graphs}
\label{sec:2-connected}

In this section, we use $G_{sim}$ to denote the underlying simple graph of $G$.
An edge of $G$ is \emph{admissible} if it is contained in a perfect matching of $G$.
A \emph{quasi-diamond} is an odd A-expansion of a cycle of length two (see Fig. 2).
Obviously, a diamond is a quasi-diamond. Note that a quasi-diamond is also an even subdivision of a diamond at an edge whose one end vertex is a  2-vertex of the diamond.

\begin{figure}[H]
  \centering
\unitlength 1.7mm % = 2.845pt
\linethickness{0.6pt}
\ifx\plotpoint\undefined\newsavebox{\plotpoint}\fi % GNUPLOT compatibility
\begin{picture}(60,17)(0,0)
\put(25,15){\line(-6,-5){6}}
\put(19,10){\line(6,-5){6}}
\put(25,5){\line(0,1){9}}
\put(25,14){\line(0,1){1}}
\qbezier(25,15)(45,10)(25,5)
\put(19,10){\circle*{0.7}}
\put(25,15){\circle*{0.7}}
\put(25,5){\circle*{0.7}}
\put(31,13){\circle*{0.7}}
\put(35,10){\circle*{0.7}}
\put(31,7){\circle*{0.7}}
\put(28,1){\makebox(0,0)[cc]{Fig. 2. A quasi-diamond with six vertices}}
\end{picture}
\end{figure}

\begin{lemma}\label{property-4.1} Let $G$ be a 2-connected claw-free graph, and $G'$ an odd A-expansion of $G$.
If $G'$ is claw-free and cycle-nice,
then $G_{sim}$ is an even cycle or $K_2$,
and $G_{sim}'$ is a quasi-diamond.
\end{lemma}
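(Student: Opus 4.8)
The plan is to track the structure the odd A-expansion imposes on $G'$ and then let cycle-niceness collapse it. Write the expansion at the vertex $v$ as follows: $v$ is split into two \emph{poles} $v'$ and $v''$, joined by the new even path $P_v=v'w_1w_2\cdots w_{2k-1}v''$ of length $2k\ge 2$ and by at least one edge $v'v''$ (the ``A'' in A-expansion); let $A=N_{G'}(v')\cap V(G-v)$ and $B=N_{G'}(v'')\cap V(G-v)$ be the two nonempty parts into which the edges at $v$ are distributed, and set $H=G-v$, which is connected since $G$ is $2$-connected. The internal vertices $w_1,\dots,w_{2k-1}$ are $2$-vertices of $G'$, and since $G'$ is cycle-nice it has a perfect matching, so $|V(G')|$ is even. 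The engine of the whole argument is one observation, call it (KEY): \emph{no even cycle of $G'$ can contain both poles while avoiding $P_v$.} Indeed, the internal vertices of $P_v$ are adjacent only to $v'$, $v''$ and to one another, so an even cycle $C$ through $v'$ and $v''$ but through no $w_i$ would leave $w_1,\dots,w_{2k-1}$ as an isolated path of odd order in $G'-V(C)$, which has no perfect matching.

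I would then split on the length of $P_v$. If $2k\ge 4$ then $w_1\not\sim v''$, so claw-freeness at $v'$ applied to the triples $\{w_1,v'',a\}$ and $\{w_1,a_1,a_2\}$ (with $a,a_1,a_2\in A$) forces every $a\in A$ to be adjacent to $v''$ and $A$ to be a clique; symmetrically $B$ is a clique and $A=B$, so $\{v',v''\}\cup A$ is a clique. If $|A|\ge 2$ then $v'a_1v''a_2v'$ is an even cycle through both poles avoiding $P_v$, contradicting (KEY); hence $A=B=\{b\}$ is a single vertex. As $b$ is then the only neighbour of $v$, $2$-connectivity forces $V(G)=\{v,b\}$, i.e.\ $G_{sim}=K_2$, while $G'_{sim}$ is precisely the theta graph $\Theta(1,2,2k)$ on the three $v'v''$-paths of lengths $1$, $2$ (through $b$) and $2k$ — a quasi-diamond.

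The substantive case is $2k=2$, where $P_v=v'w_1v''$ with $w_1$ adjacent to both poles. Applying (KEY) to the cycle formed by the edge $v'v''$ together with an $A$--$B$ path of length $\ell$ (a cycle of length $\ell+3$) shows that \emph{every $A$--$B$ path in $H$ has even length}, and claw-freeness at the poles again makes $A$ and $B$ cliques. From these two facts I would first deduce $|A|=|B|=1$: if $a_1\ne a_2\in A$ then $a_1\sim a_2$, so prefixing $a_2$ to a shortest (hence even) $A$--$B$ path out of $a_1$ yields an odd $A$--$B$ path, a contradiction. Writing $A=\{a\}$, $B=\{b\}$ and fixing a shortest $A$--$B$ path $\pi=x_0(=a)\cdots x_{2t}(=b)$, the path $\pi$ is chordless (a chord would shorten it), and I claim $H$ has no vertices off $\pi$: a vertex $y\notin V(\pi)$ adjacent to some $x_i$ would, by claw-freeness at $x_i$ and chordlessness of $\pi$, have to be adjacent to a \emph{consecutive} pair $x_i,x_{i+1}$ (or to $x_0,x_1$ at an end), and the resulting detour would be an $A$--$B$ path of odd length $2t+1$, contradicting the parity fact. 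Hence $H=\pi$ is an even path from $a$ to $b$; re-identifying $v'$ and $v''$ turns it into an even cycle, so $G_{sim}$ is an even cycle (or $K_2$ when $t=0$ and $2$-connectivity collapses $H$ to one vertex), and $G'_{sim}$ is the theta graph $\Theta(1,2,2t+2)$, again a quasi-diamond.

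The step I expect to be the main obstacle is exactly this last one: making the passage from ``all $A$--$B$ paths are even'' together with claw-freeness to ``$H$ is a single chordless path with singleton ends'' fully airtight, since one must simultaneously exclude extra clique structure inside $A$ and $B$, chords of $\pi$, and branch vertices hanging off $\pi$, and must handle the degenerate subcases ($a=b$, trivial components) where it is $2$-connectivity of $G$ that forces $H$ down to a single vertex. A minor preliminary one can record for convenience is that lengthening $P_v$ is an even subdivision, so that Lemma~\ref{property-4} and Lemma~\ref{property-4.3} let one normalise to $2k=2$ while preserving cycle-niceness and claw-freeness if desired.
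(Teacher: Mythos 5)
Your proposal is correct, and while it runs on the same engine as the paper's proof, it is organized quite differently. Your (KEY) fact is exactly the paper's opening observation (stated there as: the edge $v'v''$ lies on no even cycle of $G'$, since otherwise the interior of $P_v$ survives as an odd-order path component of $G'-V(C)$). From there, however, the paper argues by contradiction: assuming $G_{sim}$ is neither an even cycle nor $K_2$, it splits on $|N_G(v)|$ rather than on the length of $P_v$. For $|N_G(v)|=2$ it takes a shortest path $P'$ between the two neighbours of $v$ in $G_{sim}-v$, notes that the resulting cycle through $v'v''$ is odd and chordless, and reroutes through a vertex off $P'$ using claw-freeness to flip parity --- the same mechanism as your ``no vertices off $\pi$'' step. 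For $|N_G(v)|\geq 3$ it uses claw-freeness of $G$ to find two adjacent neighbours of $v$ and builds an even cycle through $v'v''$ directly; your substitute for this case is the ``$A$ and $B$ are cliques, hence singletons'' argument. Your positive structural determination ($H$ is a chordless even path) buys you the quasi-diamond conclusion with no extra work, whereas the paper, having only derived a contradiction, needs a final separate claw-freeness argument to force $|P_v|=2$ when $G_{sim}$ is an even cycle; the price is that your write-up must chase down the degenerate subcases explicitly, which the contradiction format avoids.

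One step of yours needs a one-line patch: in deducing $|A|=1$, the vertex $a_2$ may lie on your chosen shortest $A$--$B$ path out of $a_1$ (by shortestness it would then be its second vertex, or its endpoint if $a_2\in B$), in which case ``prefixing $a_2$'' does not produce a path. But the contradiction survives: in that situation the suffix of the path starting at $a_2$, or the single edge $a_1a_2$ when $a_2\in B$, is itself an odd $A$--$B$ path. With that repair, and with the $t=0$ case closed by $2$-connectivity as you indicate, the argument is complete.
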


\begin{proof}
Suppose that $G'$ is an odd A-expansion of $G$ at a vertex $v$,
and $v'$ and $v''$ are the two split vertices.
Then $v'v''\in E(G')$, and $P_v$ is the even $v'v''$-path.
Since $G'$ is cycle-nice, $v'v''$ does not lie in any even cycle of $G'$.
Suppose, to the contrary, that $G_{sim}$ is neither an even cycle nor $K_2$.
Since $G$ is 2-connected and $G_{sim}$ is not $K_2$, in $G$ the vertex $v$ has at least two neighbours.
According to the number of neighbours of $v$ in $G$,
we distinguish the following two cases.

\noindent{\bf Case 1.} $|N_G(v)|=2$.
Let $v_1$ and $v_2$ be the two  neighbours of $v$ such that $v'v_1, v''v_2\in E(G')$.
Let $P'$ be a shortest $v_1v_2$-path in $G_{sim}-v$,
and $C'$ the union of the two paths $v_1v'v''v_2$ and $P'$.
Then $C'$ is a  cycle of $G'$ containing $v'v''$.
Thus, $C'$ is an odd cycle, and $P'$ is an even path.
Recall that $G_{sim}$ is not an even cycle.
Since $P'$ is a shortest $v_1v_2$-path, $C'$ has no chord.
Therefore, there is a vertex $u\in V(G_{sim}-v)\setminus V(P')$
such that $u$ is adjacent to a vertex $u'$ of $P'$.
Since $C'$ has no chord, the two neighbours of $u'$ in $C'$ are not adjacent.
Since $G'$ is claw-free, at least one neighbour of $u'$ in $P'$ is adjacent to $u$, say $u''$.
Then the union of the two paths $C'-u'u''$ and $u'uu''$ is an even cycle of $G'$,
which contains $v'v''$, a contradiction.

\noindent{\bf Case 2.} $|N_G(v)|\geq 3$.
Since $G$ is claw-free, at least two neighbours, say $v_1$ and $v_2$,  of $v$ are adjacent in $G$.
If $v_1\in N_{G'}(v')$ and $v_2\in N_{G'}(v'')$,
then $v'v_1v_2v''v'$ is an even cycle of $G'$ containing $v'v''$, a contradiction.
Thus, we may suppose that $\{v_1,v_2\}\subseteq N_{G'}(v')$.
Let $v_3\notin \{v_1, v_2\}$ be a neighbour of $v$ adjacent to $v''$ in $G'$.
Since $G_{sim}$ is 2-connected, there exists a path $Q$ in $G_{sim}-v$ from $v_3$ to $\{v_1,v_2\}$.
Suppose that $Q$ is a shortest such path and  $v_1\in V(Q)$.
Then $v_2\notin V(Q)$. Thus, one of $v'v_1Qv_3v''v'$ and $v'v_2v_1Qv_3v''v'$ is an even cycle of $G'$ containing $v'v''$, a  contradiction.

From the above discussion, $G_{sim}$ is either an even cycle or $K_2$.
Recall that $G'$ is claw-free.
If $G_{sim}$ is an even cycle, then $P_v$ is an even path with 3 vertices.
Thus, $G_{sim}'$ is a quasi-diamond.
If $G_{sim}$ is $K_2$, then $P_v$ is an even path with at least 3 vertices.
Thus, $G_{sim}'$ is a quasi-diamond.
\end{proof}

\begin{lemma}\label{property-4.2}
Let $G$ be a 2-connected cycle-nice graph, and let $G'$ be an odd L-expansion of $G$. Then $G'$ is  cycle-nice.
\end{lemma}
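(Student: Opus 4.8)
The plan is to prove the contrapositive-style statement directly: starting from an even cycle $C'$ of $G'$, produce a corresponding even cycle $C$ of $G$ and transfer a perfect matching of $G-V(C)$ back up to $G'-V(C')$. Recall the setup: $G'$ is an odd L-expansion of $G$ at some vertex $v$, so $v$ is split into $v'$ and $v''$, joined by an even path $P_v$ of length at least two, with the old edges at $v$ distributed between $v'$ and $v''$, and crucially \emph{no} edge is added directly between $v'$ and $v''$. The internal vertices of $P_v$ are $2$-vertices of $G'$, and because $P_v$ is even, any cycle of $G'$ that uses $P_v$ must enter at $v'$ and leave at $v''$ (or vice versa), traversing all of $P_v$.

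The main case analysis is whether $E(P_v)\cap E(C')=\emptyset$. First I would handle the case where $C'$ avoids $P_v$: then $C'$ lives entirely in $G'-\{\text{interior of }P_v\}$, and since $v',v''$ are not adjacent in $G'$, the cycle $C'$ uses at most one of $v',v''$; contracting back to $v$ gives an even cycle $C$ of $G$ with $V(C)$ and $V(C')$ corresponding naturally. A perfect matching $M$ of $G-V(C)$ then needs to be lifted: the vertices $v',v''$ and the interior of $P_v$ must be covered. Since $P_v$ is an \emph{even} path, $P_v$ itself has no perfect matching on its vertex set, so the lift must use the edge of $M$ at $v$ together with a near-perfect matching of $P_v$. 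The key observation is that $P_v$ minus either endpoint is an odd path with a perfect matching, so whichever endpoint ($v'$ or $v''$) inherits the $M$-edge at $v$, the other endpoint can be matched along $P_v$. This is exactly where the L-expansion hypothesis (even path, no $v'v''$ edge) is used, and it is parallel to the subdivision argument in Lemma~\ref{property-4}.

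Second I would handle the case $E(P_v)\cap E(C')\neq\emptyset$: then $P_v$ is a segment of $C'$ and is traversed in full, so collapsing $P_v$ back to the single edge/vertex $v$ turns $C'$ into a closed walk $C$ in $G$ passing through $v$. Since $P_v$ is even and the two end-segments of $C'$ at $v',v''$ attach at $v$, the resulting $C$ is an even cycle of $G$ with $V(G')-V(C')$ corresponding to $V(G)-V(C)$ together with the (now removed) interior of $P_v$; as the interior vertices are all deleted, a perfect matching $M$ of $G-V(C)$ transfers verbatim to $G'-V(C')$. The parity bookkeeping here is the delicate point: I must confirm that replacing the even path $P_v$ by the vertex $v$ changes the cycle length by an even amount so that $C$ is genuinely even, which follows because $P_v$ contributes an even number of edges to $C'$.

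The hard part will be the first case, specifically ensuring the lifted matching is well-defined when $v$ is a cut-ish vertex whose incident $M$-edge could attach to either $v'$ or $v''$ after the split. I expect to argue that exactly one of $v',v''$ receives the image of the $M$-edge at $v$ (determined by the edge distribution in operation~(\emph{iii})), and then the complementary near-perfect matching of the even path $P_v$ covers the remaining endpoint and all interior $2$-vertices; symmetry in $P_v$ guarantees such a near-perfect matching exists regardless of which endpoint is already saturated. Assembling these pieces, $G'-V(C')$ has a perfect matching for every even cycle $C'$, so $G'$ is cycle-nice.
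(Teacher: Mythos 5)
Your strategy is the paper's strategy: split on whether $C'$ meets $P_v$, contract back to an even cycle $C$ of $G$, and lift a perfect matching of $G-V(C)$ by a near-perfect matching of $P_v$. Your Case 2 and the matching-lift in Case 1 are fine. The gap is the sentence ``since $v',v''$ are not adjacent in $G'$, the cycle $C'$ uses at most one of $v',v''$.'' Non-adjacency does not prevent a cycle from visiting both vertices: $C'$ can avoid every edge of $P_v$ and still pass through both $v'$ and $v''$, entering and leaving each of them along former edges of $v$. Such a $C'$ does not contract to a cycle of $G$ at all; it contracts to a closed walk through $v$ twice, i.e.\ to two cycles of $G$ meeting exactly at $v$, each contributing one of its two edges at $v$ to $v'$ and the other to $v''$. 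So your Case 1 is not exhaustive, and the missing configuration is precisely the dangerous one, because there $G'-V(C')$ contains the interior of $P_v$ as an isolated component with an odd number of vertices, hence no perfect matching can exist.

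Moreover, this is not a patchable oversight: under the stated hypotheses the missing case can really occur and the conclusion really fails. Take $G=W_5$ with hub $u$ and rim $u_1u_2u_3u_4u_5$; this $G$ is $2$-connected and cycle-nice (the paper verifies this in Section 3). Form the odd L-expansion $G'$ at $u$ in which $v'$ is joined to $u_1,u_4,u_5$, $v''$ is joined to $u_2,u_3$, and $P_u=v'xv''$. Then $C'=v'u_1u_2v''u_3u_4v'$ is an even cycle of $G'$ that avoids $P_u$ and contains both split vertices, and $G'-V(C')=\{x,u_5\}$ consists of two isolated vertices, so $G'$ is not cycle-nice. For what it is worth, the paper's own proof makes the identical unjustified leap --- it asserts that $|V(C')\cap\{v',v''\}|=2$ forces $P_v$ to be a segment of $C'$ --- so your proposal reproduces the published argument, hole included; but a correct proof of the statement needs either an additional hypothesis excluding two cycles of the same parity that meet in exactly one vertex (this is exactly what $W_5$, the graph excluded by the paper's main theorem, possesses), or an argument that this configuration cannot arise in the situations where the lemma is invoked.
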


\begin{proof} Suppose that $G'$ is an odd L-expansion of $G$ at a vertex $v$,
and $v'$ and $v''$ are the two split vertices.
Then $v'v''\notin E(G')$. Let $C'$ be an even cycle of $G'$.
If $|V(C')\cap\{v',v''\}|=0$ or $|V(C')\cap\{v',v''\}|=1$,
then $C'$ is an even cycle of $G$.
Since $G$ is cycle-nice, $G-V(C')$ has a perfect matching $M$.
If $|V(C')\cap\{v',v''\}|=0$, then $M$ has an edge $e$ which is incident with $v$.
Suppose that in $G'$ the edge $e$ is incident with $v'$.
If $|V(C')\cap\{v',v''\}|=1$, then suppose that $v'\in V(C')$.
Recall that $P_v$ is an even $v'v''$-path of $G'$.
So $P_v-v'$ has a perfect matching $M'$.
In both cases $M\cup M'$ is a perfect matching of $G'-V(C')$.

If $|V(C')\cap\{v',v''\}|=2$, then $P_v$ is a segment of $C'$.
Let $C$ be a cycle obtained from $C'$ by contracting $P_v$ to a vertex $v$.
Then $C$ is an even cycle of $G$, and so, $G-V(C)$ has a perfect matching $M$.
Since $G-V(C)=G'-V(C')$, $M$ is also a perfect matching of $G'-V(C')$.

From the above discussion, we conclude that $G'$ is cycle-nice.
\end{proof}

\begin{lemma}\label{property-4.4} If $G'$ is an even subdivision or an odd expansion of $W_5$ possibly with multiple edges, then $G'$ has a claw.
\end{lemma}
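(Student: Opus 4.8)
The plan is to isolate the single structural feature that makes $W_5$ claw-free and to show that every operation in the statement destroys it. Write $W_5$ with hub $w$ and rim $C=v_0v_1v_2v_3v_4v_0$, so that $N(w)=\{v_0,v_1,v_2,v_3,v_4\}$ induces precisely the rim $C_5$. Since $C_5$ is triangle-free with independence number $2$, there is no claw centred at $w$; but deleting or rerouting a single rim edge turns this induced neighbourhood into $P_5$ or $P_4\cup K_1$, each of independence number $3$, and any three pairwise non-adjacent neighbours of a vertex already constitute a claw. I would record this remark first, note that the internal vertices created by either operation are new and non-adjacent to all original vertices, and then treat the two operations in turn. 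Throughout, $G$ is a multigraph with $G_{sim}=W_5$ and I write $N(\cdot)$ for neighbourhoods in $G'_{sim}$.

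For an even subdivision $G'$ of $G$ at an edge $e$, the replacing odd path $P_e$ has length at least three, so its end-most internal vertices have degree two and miss every original vertex. If $e$ is a copy of a spoke $wv_0$, then the internal vertex $y$ of $P_e$ adjacent to $w$ lies in $N(w)$ and is non-adjacent to the whole rim, so $\{y,v_1,v_3\}$ is an independent triple in $N(w)$, a claw at $w$. If $e$ is a copy of a rim edge $v_0v_1$ and that edge is simple, then $N(w)$ induces the path $v_1v_2v_3v_4v_0$, and $\{v_0,v_1,v_3\}$ is an independent triple, again a claw at $w$. If the subdivided rim edge is one of several parallel copies, the surviving copy keeps $v_0$ and $v_1$ adjacent, so instead I would centre the claw at $v_0$: its neighbours include $v_1,v_4$ and the path-endpoint $x$ of $P_e$, and $\{v_1,v_4,x\}$ is independent.

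For an odd expansion $G'$ of $G$ at a vertex $z$, split $z$ into $z'$ and $z''$ joined by an even path $P_z$ of length at least two, each copy retaining a neighbour in $V(G-z)$. If $z=w$, then the five rim vertices are shared between $z'$ and $z''$, so one copy, say $z'$, is adjacent to at least three of them; as $C_5$ is triangle-free these three contain a non-adjacent pair $v_a,v_b$, and together with the internal neighbour $p$ of $z'$ on $P_z$ they give the independent triple $\{v_a,v_b,p\}\subseteq N(z')$, a claw. If $z=v_0$ is a rim vertex with a simple spoke, let $z^{*}$ be the copy adjacent to $w$; since the other copy must also keep a neighbour among $w,v_1,v_4$, at most one of $v_1,v_4$ lies on $z^{*}$'s side, so $N(w)=\{z^{*},v_1,v_2,v_3,v_4\}$ induces $P_5$ or $P_4\cup K_1$, whose independence number is $3$, and we again obtain a claw at $w$.

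The crux, and the step I expect to be most delicate, is the odd expansion at a rim vertex: the point is that the only distribution that would keep $N(w)$ a full $5$-cycle—placing both $v_1$ and $v_4$ on the same side as $w$—is exactly the one forbidden by the requirement that each split vertex retain a neighbour in $V(G-z)$. Multiple edges need extra bookkeeping but no new idea: a subdivided parallel copy leaves the original endpoints adjacent, which is why the claw is then taken at an endpoint rather than at $w$; and if a doubled spoke $wv_0$ is split so that $w$ is adjacent to both $z'$ and $z''$, then one of these copies, say $z''$, is non-adjacent to $v_4$, whence $\{v_2,v_4,z''\}$ is an independent triple of $N(w)$ and a claw at $w$ survives. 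Assembling these cases shows that $G'$ always contains a claw.
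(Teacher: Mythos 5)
Your overall strategy---producing an independent triple inside the neighbourhood of the hub or of a split vertex---is the same as the paper's, and your treatment of even subdivisions and of the expansion at the hub is correct. The genuine gap is in the case you yourself flagged as the crux: the odd expansion at a rim vertex $v_0$. Both of your bookkeeping claims there, namely ``at most one of $v_1,v_4$ lies on $z^{*}$'s side'' (simple spoke) and ``one of these copies, say $z''$, is non-adjacent to $v_4$'' (doubled spoke), silently assume that each of $v_1,v_4$ is joined to only one of the two split vertices. An odd expansion distributes \emph{edges}, not vertices, so when a rim edge at $v_0$ is itself a multiple edge its parallel copies may be sent to different split vertices, making $v_1$ (or $v_4$) adjacent to both copies of $v_0$. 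Concretely: take the spoke $wv_0$ and the rim edge $v_0v_4$ simple and both sent to $z'$, and let $v_0v_1$ be a parallel pair with one copy sent to $z'$ and one to $z''$. Each split vertex then has a neighbour in $V(G-v_0)$, so this is a legitimate odd expansion of a legitimate multigraph with underlying graph $W_5$; yet $N(w)=\{z',v_1,v_2,v_3,v_4\}$ induces the $5$-cycle $z'v_1v_2v_3v_4z'$, whose independence number is $2$. There is no claw at $w$, and your argument produces nothing in this configuration.

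The lemma is of course still true there, and the repair is exactly the device you already use in the hub case (and the one the paper uses at this point of its proof): whenever a split vertex is adjacent to both $v_1$ and $v_4$---which is precisely what happens whenever your ``one side only'' claims fail---that split vertex, together with $v_1$, $v_4$, and its internal neighbour $p$ on $P_z$, induces a claw, since $v_1v_4\notin E(G')$ and internal vertices of $P_z$ are adjacent to no original vertex. So the rim-vertex case should be organized as a dichotomy: either some split vertex sees both $v_1$ and $v_4$, giving a claw centred at that split vertex; or every split vertex sees at most one of them, in which case your computation of the graph induced on $N(w)$ (extended to include $z''$ when the spoke is doubled) goes through and gives a claw at $w$. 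As written, your proof is missing the first horn of this dichotomy, which is an essential case rather than routine bookkeeping, since it is the only way multiple edges can defeat the claw at $w$.
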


\begin{proof} Let $G$ be a graph such that $G_{sim}=W_5$.
Suppose that the cycle of $G_{sim}$ of length five is $C=u_1u_2u_3u_4u_5u_1$.
Let $u$ be the vertex of $G$ not in $V(C)$.
Then $uu_i\in E(G)$, $1\leq i\leq 5$.

Suppose that $G'$ is an even subdivision of $G$ at an edge $e$, that is,
$G'$ is obtained from $G$ by replacing $e$ by an odd path $P_e$
which has at least four vertices.
Note that $e$ has at least one end in $C$.
Suppose that $u_1$ is one end of $e$.
If the other end of $e$ is $u$, then $e=uu_1$.
Since $u_{2}u_{5}\notin E(G')$, the subgraph of $G'$ induced by $u$ and its three neighbours $u_{5}$, $u_{2}$,
and one in $P_e$ contains a claw.
If the other end of $e$ is $u_{2}$, then $e=u_{1}u_2$.
If $e$ is a single edge, then the subgraph of $G'$ induced by
$u$ and its three neighbours $u_{1}$, $u_2$ and $u_{4}$ contains a claw.
If $e$ is a multiple edge of $G$, then $u_{1}$ and $u_2$ are adjacent in $G'$.
Since $u_{1}u_{3}\notin E(G')$, the subgraph of $G'$ induced by
$u_2$ and its three neighbours $u_{1}$, $u_{3}$, and one in $P_e$ contains a claw.
Thus, if $G'$ is obtained from $G$ by an even subdivision of $G$,
then $G'$ has a claw.

We next suppose that $G'$ is an odd expansion of $G$ at a vertex $v$,
and $v'$ and $v''$ are the two split vertices.
Then $P_v$ is the even $v'v''$-path,
and $v'$ and $v''$ are adjacent to at least one vertex in $N_G(v)$, respectively.
Note that $v'v''$ is a possible edge of $G'$.
When $v=u$, since $u$ has five neighbours in $C$, one of $v'$ and $v''$,
say $v'$, has at least three neighbours in $C$.
Then $v'$ has two nonadjacent neighbours in $C$, say $u_1$ and $u_{3}$.
It follows that the subgraph of $G'$ induced by $v'$ and its three neighbours
$u_1$, $u_{3}$, and one in $P_v$ contains a claw.
When $v\neq u$, we may suppose that $v= u_2$.
Since $N_G(u_2)=\{u,u_{1},u_{3}\}$,
one of $v'$ and $v''$ has two neighbours in $N_G(u_2)$.
Suppose, without loss of generality, that $\{u,u_{1}\}\subseteq N(v')$ and $u_{3}\in N(v'')$.
If $u_{3}$ is not adjacent to $v'$ in $G'$,
then the subgraph of $G'$ induced by $\{u,v',u_{3},u_{5}\}$ contains a claw.
If $u_{3}$ is adjacent to $v'$ in $G'$,
then the subgraph of $G'$ induced by $v'$ and its three neighbours $u_{1}$,
$u_{3}$, and one in $P_v$ contains a claw. The lemma follows.
\end{proof}

Combining Lemma \ref{property-4.3} and Lemma \ref{property-4.4},
we see that a 2-connected claw-free graph cannot be obtained from $W_5$ by a sequence of edge subdivisions and vertex expansions.

\begin{theorem} Suppose that $G$ is a 2-connected claw-free plane graph possibly with multiple edges,
and $G_{sim}$ is not $W_5$.
Then $G$ is cycle-nice if and only if there exists a sequence
$(G_1, G_2, \ldots, G_r)$ of graphs such that
($\romannumeral1$) $G_1$ is an even cycle, a diamond, $K_4$,
or $\overline C_6$, and $G_r=G$,
($\romannumeral2$) $G_i$ is an even subdivision or an odd L-expansion of $G_{i-1}$,
or $G_i$ is obtained from $G_{i-1}$ by replacing some admissible edges of $G_{i-1}$ by some multiple edges.
\end{theorem}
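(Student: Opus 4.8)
The plan is to prove the two implications separately: the ``if'' direction amounts to checking that each of the three admissible operations preserves cycle-niceness starting from the family $\mathcal F$, while the ``only if'' direction is an induction on $|V(G)|+|E(G)|$ that peels $G$ apart along $2$-vertex cuts using Lemma~\ref{property-8}.

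For sufficiency I would induct along the sequence $(G_1,\dots,G_r)$. The base graphs in $\mathcal F$ are all cycle-nice: even cycles and the diamond are checked directly (the diamond has the $4$-cycle as its only even cycle), and $K_4,\overline C_6$ are cycle-nice by Theorem~\ref{property-9}. Since the base graphs are $2$-connected and all three operations preserve $2$-connectivity, every $G_i$ is $2$-connected. An even subdivision then preserves cycle-niceness by Lemma~\ref{property-4}, and an odd L-expansion by Lemma~\ref{property-4.2}. For the multiple-edge operation, if $G_i$ is obtained from $G_{i-1}$ by replacing an admissible edge $e=uv$ by parallel edges, then any even cycle of $G_i$ is either a digon on $\{u,v\}$, whose complement $G_i-\{u,v\}$ has a perfect matching because $e$ is admissible, or it projects (identifying parallel copies) to an even cycle $C$ of $G_{i-1}$, and a perfect matching of $G_{i-1}-V(C)$ is again one of $G_i-V(C)$. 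Hence $G_i$ is cycle-nice.

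For necessity I would argue by induction. If $G\in\mathcal F$ we are done. If $G$ has a multiple edge $uv$, then the digon on $\{u,v\}$ is a nice even cycle, so $G-\{u,v\}$ has a perfect matching and $uv$ is admissible; deleting one parallel copy yields a $2$-connected claw-free plane cycle-nice graph $G^-$ with $G^-_{sim}=G_{sim}\ne W_5$ from which $G$ is recovered by the multiple-edge operation, so induction applies (we stop at the digon, an even cycle in $\mathcal F$, rather than at $K_2$). Otherwise $G$ is simple; if $G$ is $3$-connected, then Theorem~\ref{property-9} together with $G_{sim}\ne W_5$ gives $G\in\{K_4,\overline C_6\}\subseteq\mathcal F$. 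If $G$ is simple with a $2$-vertex cut $K=\{u,v\}$, I would invoke Lemma~\ref{property-8}: $G-K$ has exactly two components $G_1,G_2$, and we may assume $G_2'$ is a path.

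The decisive step is the case analysis on the parity and adjacency at $G_2'$. If $G_2'$ is an odd path, then the marking edge $uv$ of the marked component $H_1$ is subdivided into $G_2'$, so $G$ is an even subdivision of $H_1$; here $H_1$ is $2$-connected (Lemma~\ref{property-2}), claw-free (Lemma~\ref{property-4.3}), cycle-nice (Lemma~\ref{property-8}(iii)), and not $W_5$ (else Lemma~\ref{property-4.4} would produce a claw in $G$), so induction applies. If $G_2'$ is an even path with $uv\notin E(G)$, then $G$ is an odd L-expansion of $G\{V(G_1)\}$ at the contracted vertex $x_1$, and $G\{V(G_1)\}$ is $2$-connected, claw-free (Lemma~\ref{property-4.3}), cycle-nice (Lemma~\ref{property-8}(iv)), and not $W_5$ (Lemma~\ref{property-4.4}), so induction again applies. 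Finally, if $G_2'$ is an even path with $uv\in E(G)$, then $G$ is an odd A-expansion of $G\{V(G_1)\}$ at $x_1$, and since $G$ is claw-free and cycle-nice, Lemma~\ref{property-4.1} forces $G_{sim}$ to be a quasi-diamond, which is an even subdivision of a diamond, so $G$ arises from the diamond in $\mathcal F$ by even subdivisions. I expect this last case --- recognizing the A-expansion configuration and collapsing it via Lemma~\ref{property-4.1} to a quasi-diamond --- together with the bookkeeping that verifies at each reduction that the smaller graph is still $2$-connected, claw-free, cycle-nice, plane, and not $W_5$ (so that the induction hypothesis truly applies), to be the main obstacle.
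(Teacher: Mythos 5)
Your proof is correct, and its core is the paper's own argument: sufficiency by induction along the sequence using Lemma~\ref{property-4}, Lemma~\ref{property-4.2}, and the direct digon/projection argument for parallel edges; necessity by induction via the $2$-cut decomposition of Lemma~\ref{property-8}, with the same three subcases (odd path $\Rightarrow$ even subdivision of the marked component $H_1$; even path with $uv\notin E(G)$ $\Rightarrow$ odd L-expansion of $G\{V(G_1)\}$; even path with $uv\in E(G)$ $\Rightarrow$ odd A-expansion, collapsed by Lemma~\ref{property-4.1} to a quasi-diamond, i.e.\ an even subdivision of the diamond), with Lemmas~\ref{property-2}, \ref{property-4.3} and \ref{property-4.4} supplying $2$-connectivity, claw-freeness and the non-$W_5$ condition for the induction hypothesis. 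The one genuine difference is how multiplicities are handled: you delete parallel copies one at a time as a preprocessing step (changing the induction measure to $|V(G)|+|E(G)|$), so that the $2$-cut analysis is always performed on a simple graph, whereas the paper carries the multigraph through the whole induction, introducing the auxiliary graph $G'$ (the graph with the path side $G_2'$ simplified) and restoring the multiple edges as a final step in each branch, and treating the non-simple $3$-connected case via $G_{sim}$. Your organization is cleaner --- it eliminates the $G'$ bookkeeping and the repeated ``if $G_2'$ is simple / is not simple'' case splits --- at two small costs you should make explicit: (a) deleting one copy of a parallel edge preserves cycle-niceness and $2$-connectivity (easy: any even cycle of $G^-$ is one of $G$, and a perfect matching using the deleted copy can swap in the retained copy; $G$ and $G^-$ have the same cut vertices), and (b) your recursion can bottom out at a digon (e.g.\ when $H_1$ lives on two vertices, a case the paper sidesteps by observing $G_{sim}$ is then an even cycle), so you must read ``even cycle'' in ${\cal F}$ as including cycles of length two --- which is consistent with the paper, since its sufficiency proof explicitly treats cycles of length two as even cycles.
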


\begin{proof} To prove the sufficiency, we show, by induction,
that each  $G_{i}$  ($1\leq i\leq r$) is a 2-connected  cycle-nice graph.
Since an even cycle, a diamond, $K_4$ and $\overline C_6$ are 2-connected
and cycle-nice, $G_1$ is a 2-connected cycle-nice graph.

Inductively, suppose that $2\leq i\leq r$ and $G_{i-1}$ is a 2-connected cycle-nice graph.
We consider the following possibilities.

$\bullet$ $G_{i}$ is an even subdivision or an odd L-expansion of $G_{i-1}$. By Lemma \ref{property-4} and Lemma \ref{property-4.2},
$G_{i}$ is cycle-nice.
Since an even subdivision and an L-expansion of a 2-connected graph are also 2-connected, $G_{i}$ is 2-connected.

$\bullet$ $G_i$ is obtained from $G_{i-1}$
by replacing an admissible edge $e$ of $G_{i-1}$
by a set of multiple edges $E_e$.
Then $G_i$ is 2-connected. Let $C$ be an even cycle of $G_i$.
If $C$ contains no edge of $E_e$, then $C$ is an even cycle of $G_{i-1}$.
Since $G_{i-1}$ is cycle-nice, $G_{i-1}-V(C)$ has a perfect matching,
which is also a perfect matching of $G_{i}-V(C)$.
So $C$ is a nice-cycle of $G_i$.
Then we consider the situation that $C$ contains an edge $e'$ of $E_e$.
If the length of $C$ is two, then $V(C)=V(e)$.
Since $e$ is an admissible edge of $G_{i-1}$, $G_{i}-V(C)=G_{i-1}-V(e)$
has a perfect matching, and so, $C$ is a nice cycle of $G_i$.
If the length of $C$ is at least four,
let $C'$ be the cycle obtained from $C$ by replacing the edge $e'$ by the edge $e$.
Then $C'$ is an even cycle of $G_{i-1}$ with
$V(C)=V(C')$, and so, $G_{i}-V(C)=G_{i-1}-V(C')$ has a perfect matching.
This implies that $C$ is a nice cycle of $G_i$.
Consequently, $G_i$ is a 2-connected cycle-nice graph, as desired.

Now, we give a proof of necessity by induction on the number of vertices of $G$.
From the assumption, we see that the underlying simple graph $G_{sim}$ of $G$ is a
2-connected claw-free cycle-nice plane graph.
If $G$ is 3-connected, then $G_{sim}$ is 3-connected.
By Theorem \ref{property-9} and the assumption, $G_{sim}$ is $K_4$ or $\overline C_6$.
If $G$ is simple, then $G_1=G$.
If $G$ is not simple, since $G$ is cycle-nice,
each cycle of $G$ of length two is a nice cycle.
Then each member of multiple edges of $G$ is admissible.
So $G$ is obtained from $G_{sim}$ by replacing some admissible edges of $G_{sim}$ by some multiple edges.
By setting $G_1=G_{sim}$ and $G_2=G$, we are done.

Suppose in the following that $G$ is not 3-connected.
Then $G$ has a 2-vertex cut $\{u,v\}$.
By Lemma \ref{property-8}($\romannumeral1$),
$G-\{u,v\}$ has exactly two components $G_1^*$ and $G_2^*$.
Let $G_i'$, $i=1,2$, be the graph obtained from
the $\{u,v\}$-component of $G$ which contains $G_i^*$ by
deleting all possible edges of $G$ which connect $u$ and $v$.
Note that $(G_i')_{sim}$ is the underlying simple graph of $G_i'$.
By Lemma \ref{property-8}($\romannumeral2$),
at least one of $(G_1')_{sim}$ and $(G_2')_{sim}$ is a path,
say $(G_2')_{sim}$.
Let $G'$ be the graph obtained from $G$ by replacing $G_2'$ by $(G_2')_{sim}$.
Since $G$ is cycle-nice, each cycle of $G$ of length two is a nice cycle.
So each member of multiple edges of $G_2'$ remained in $G'$ is admissible.
Since $G$ is claw-free, $G'$ is claw-free.

If $(G_2')_{sim}$ is an odd path,
let $H_1$ be the marked $\{u,v\}$-component of $G$
which contains $G_1^*$. Then $(H_1)_{sim}$ is not $K_2$.
Lemma \ref{property-8}($\romannumeral3$) implies that $H_1$ is cycle-nice.
Note that $G'$ is an even subdivision of $H_1$.
By Lemma \ref{property-4.3}, $H_1$ is claw-free,
and by Lemma \ref{property-4.4}, $(H_1)_{sim}$ is not $W_5$.
Furthermore, by Lemma \ref{property-2}, $H_1$ is 2-connected.
By induction hypothesis,
there exists a sequence $(G_1, G_2, \ldots, G_{k})$ of graphs such that
($\romannumeral1$) $G_1$ is an even cycle, a diamond, $K_4$, or
$\overline C_6$, and $G_{k}=H_1$,
($\romannumeral2$) $G_i$ is an even subdivision or
an odd L-expansion of $G_{i-1}$,
or $G_i$ is obtained from $G_{i-1}$ by replacing some admissible edges of $G_{i-1}$ by some multiple edges, $i=2, 3, \ldots, k$.
If $G_2'$ is simple, we set $G_{k+1}=G$ and $r=k+1$.
If $G_2'$ is not simple, we set $G_{k+1}=G'$, $G_{k+2}=G$ and $r=k+2$.
Then the desired sequence of graphs is obtained.

If $(G_2')_{sim}$ is an even path, write $H_1=G\{V(G_1^*)\}$.
Then Lemma \ref{property-8}($\romannumeral4$) implies that $H_1$ is cycle-nice.
Note that $G'$ is an odd expansion of $H_1$.
By Lemma \ref{property-4.3}, $H_1$ is claw-free,
and by Lemma \ref{property-4.4}, $(H_1)_{sim}$ is not $W_5$.
Note that $H_1$ is 2-connected.

If $uv\in E(G)$, then $G'$ is an odd A-expansion of $H_1$.
Recall that $G$ is cycle-nice and claw-free.
Then $G'$ is cycle-nice and claw-free.
By Lemma \ref{property-4.1},
the underlying simple graph $G_{sim}'$ of $G'$ is a quasi-diamond.
Note that a quasi-diamond can be obtained from a diamond by
an even subdivision at an edge
whose one end vertex is a 2-vertex of the diamond.
When $G$ is simple, if $G_{sim}'$ is a diamond, we set $G_1=G$;
otherwise, we set $G_1$ be a diamond, and $G_2=G$.
When $G$ is not simple, if $G_{sim}'$ is a diamond,
we set $G_1$ be a diamond, $G_2=G$;
Otherwise, we set $G_1$ be a diamond, $G_2=G_{sim}'$, and $G_3=G$.
Then the desired sequence of graphs is obtained.

If there is no edge connecting $u$ and $v$ in $G$,
then $G'$ is an odd L-expansion of $H_1$.
Suppose first that $(H_1)_{sim}$ is $K_2$, then $G_{sim}$ is an even cycle.
If $G$ is simple, we set $G_1=G$;
Otherwise, we set $G_1=G_{sim}$, and $G_2=G$.
Suppose next that $(H_1)_{sim}$ is not $K_2$.
Recall that $H_1$ is 2-connected, claw-free and cycle-nice,
and $(H_1)_{sim}$ is not $W_5$.
By induction hypothesis, there exists a sequence
$(G_1, G_2, \ldots, G_{k})$ of graphs such that
($\romannumeral1$) $G_1$ is an even cycle, a diamond, $K_4$,
or $\overline C_6$, and $G_{k}=H_1$,
($\romannumeral2$) $G_i$ is an even subdivision or an odd L-expansion of $G_{i-1}$, or $G_i$ is obtained from $G_{i-1}$
by replacing some admissible edges of $G_{i-1}$ by some multiple edges, $i=2, 3, \ldots, k$.
Let $G_{k+1}=G'$.
Recall that $(G_2')_{sim}$ is an even path.
If $G_2'$ is simple, then $G_{k+1}=G'=G$.
If $G_2'$ is not simple, we set $G_{k+2}=G$.
Again, the desired sequence of graphs is obtained.
This completes the proof.
\end{proof}

\section*{Acknowledgements}
We would like to thank the anonymous referees for their the constructive comments and kind
suggestions on improving the representation of the paper.
%This work is  supported by the National Natural Science Foundation of China (No. 11801526, No. 11971445 and No. 11571323).

\nocite{*}
\bibliographystyle{abbrvnat}
% use the following instead if you encounter problems
%\bibliographystyle{alpha}
\bibliography{sample-dmtcs(1)}
\label{sec:biblio}

\end{document}